\def\B#1#2{{#1\choose #2}}
\definecolor{orange}{rgb}{1,0.5,0}
\newtheorem{thm}{Theorem}
\newtheorem{coro}[thm]{Corollary}
\newtheorem{lemma}[thm]{Lemma}
\newtheorem{propo}[thm]{Proposition}
\theoremstyle{definition}
\newtheorem*{defn}{Definition}
\newcommand{\E}{{\rm E}}
\title{An index formula for simple graphs}
\author{Oliver Knill}
\date{Apr 30, 2012}
\address{
        Department of Mathematics \\
        Harvard University \\
        Cambridge, MA, 02138
        }
\subjclass{Primary: 05C10, 57M15, 68R10, 53A55, Secondary: 60B99, 94C99, 97K30}
\keywords{Curvature, topological invariants, graph theory, Euler characteristic }
\begin{document}
\maketitle

\begin{abstract}
We prove that any odd dimensional geometric graph $G=(V,E)$ 
has zero curvature everywhere.
To do so, we prove that for every injective function $f$ on the vertex set 
$V$ of a simple graph the index formula 
$\frac{1}{2} [2-\chi(S(x)) - \chi(B_f(x))]=(i_f(x)+i_{-f}(x))/2=j_f(x)$ holds, where 
$i_f(x)$ is a discrete analogue of the index of the gradient vector field $\nabla f$
and where $B_f(x)$ is a geometric graph defined by $G$ and $f$. 
The Poincar\'e-Hopf formula $\sum_x j_f(x)=\chi(G)$ allows so to express the Euler characteristic 
$\chi(G)$ of $G$ in terms of smaller dimensional graphs defined by 
the unit sphere $S(x)$ and the "hypersurface graphs" $B_f(x)$. 
For odd dimensional geometric graphs, $B_f(x)$ is a geometric graph of dimension ${\rm dim}(G)-2$ and 
$j_f(x)=-\chi(B_f(x))/2=0$ implying $\chi(G)=0$ and zero curvature $K(x)=0$ for all $x$. 
For even dimensional geometric graphs, the formula becomes $j_f(x) = 1-\chi(B_f(x))/2$ and allows 
with Poincar\'e-Hopf to write the Euler characteristic of $G$ as a sum  of the Euler 
characteristic of smaller dimensional graphs. 
The same integral geometric index formula also is valid for compact Riemannian manifolds 
$M$ if $f$ is a Morse function, $S(x)$ is a sufficiently small geodesic sphere around 
$x$ and $B_f(x)= S(x) \cap \{ y \; | \; f(y)=f(x) \; \}$. 
\end{abstract}

\section{Introduction}

While for compact Riemannian manifolds $M$, the curvature $K(x)$ satisfying
the Gauss-Bonnet-Chern relation $\int_M K(x) \; d\nu(x)  = \chi(M)$ is defined only in
even dimensions, the discrete curvature 
$$  K(x) = \sum_{k=0}^{\infty} (-1)^k \frac{V_{k-1}(x)}{k+1} \; , $$
satisfying Gauss-Bonnet is defined for arbitrary simple graphs $G=(V,E)$.
In the definition of curvature, $V_k(x)$ is the number of $K_{k+1}$ subgraphs 
in the sphere $S(x)$ at a vertex $x$ and 
$V_{-1}(x)=1$. With this curvature, the Gauss-Bonnet theorem 
$$ \sum_{x \in V} K(x) = \chi(G) \;  $$
holds \cite{cherngaussbonnet}, 
where $\chi(G)=\sum_{k=0}^{\infty} (-1)^k v_k$ is the {\bf Euler characteristic} of the graph, and where
$v_k$ is the number of $K_{k+1}$ subgraphs of $G$. 
Since for all geometric odd dimensional graphs we have seen, the curvature has shown to be 
constant zero, we wondered whether it is constant zero in general. For $5$-dimensional geometric graphs for example,
the curvature reduces to $K(x) = -E(x)/6 + F(x)/4 - C(x)/6$, where $E(x)$ is the number
of edges, $F(x)$ the number of faces and $C(x)$ the number three dimensional chambers in the
unit sphere $S(x)$ of a vertex $x$. We show here that $K(x)$ is constant $0$ for any odd dimensional graph for 
which the Euler characteristic of spheres is $2$ at every point and for which every $S(x)$ inherits this
geometric property. \\

The key tool to analyze the curvature $K(x)$ is to represent it as an expectation of an index 
\cite{indexexpectation}. 
This integral geometric approach shows that the curvature $K(x)$ of a finite simple graph $G=(V,E)$ 
is equal to the expectation of indices $i_f(x)=1-\chi(S_f^-(x))$ when averaging over a probability 
space of injective scalar functions $f$ on $V$ and where 
$S_f^-(x)= \{ y \in V \; | \; f(y)<f(x) \; \}$. 
This links Poincar\'e-Hopf 
$$  \sum_{v \in V} i_f(v) = \chi(G) $$
with Gauss-Bonnet $\sum_{v \in V} K(v) = \chi(G)$ and
gives a probabilistic interpretation of curvature as expectation $K(x) = 1-\E[\chi(S_f^-(x))]$, 
where $\E[\chi(S_f^-(x))]$ is the average Euler characteristic of the subgraph
$S_f^-(x)$ of $S(x)$, 
when integrating over all injective functions. Proof summaries of 
\cite{cherngaussbonnet,poincarehopf,indexexpectation} are included 
in the Appendix A. \\
 
While Gauss-Bonnet, Poincar\'e-Hopf and index expectation hold in full generality for 
any finite simple graph, we can say more if graphs are geometric in the sense
that they share properties from unit spheres in the continuum. 
Similarly as manifolds appear in different contexts like topological or  differentiable 
manifolds, one can distinguish different types of geometric graphs by imposing properties on unit spheres $S(x)$ or
larger spheres. In the present paper we see what happens if $G$ is geometric in the sense that for all vertices $x$, 
the unit spheres $S(x)$ are $(d-1)$-dimensional geometric graphs with Euler characteristic $1+(-1)^d$.  \\

The topological features of unit spheres in a graph define a discretized local
Euclidean structure on a graph. For triangularizations of manifolds, 
the unit spheres are $(d-1)$-dimensional graphs which share properties of the unit spheres 
in the continuum. Without additional assumptions, the unit spheres can be
arbitrarily complicated because pyramid constructions allow
any given graph to appear as a unit sphere of an other graph. A weak geometric restriction is to 
assume that all the unit spheres $S(x)$ are $d-1$ dimensional, have Euler characteristic $1-(-1)^d$, 
and share the property of being geometric. \\

When trying to prove vanishing curvature for odd dimensional graphs, one is lead to an alternating sum 
$W_1(x)-W_2(x)+W_3(x)-\cdots + (-1)^d W_{d-1}(x)$, where $W_k(x)$ is the number of $k$-dimensional 
complete graphs $K_{k+1}$ in $S(x)$ which contain both vertices in $S^-(x)=\{ y \; | \; f(y)<f(x) \; \}$ and 
$S^+(x)=\{ y \; | \; f(y)>f(x) \; \}$. 
We show that $W_k$ can be interpreted as the number $(k-1)$-dimensional faces in a
$d-2$ dimensional polytop $A_f(x)$, a graph which when completed is a $d-2$ dimensional geometric graph
$B_f(x)$ and therefore has Euler characteristic $0$ by induction if $d$ is odd. 
When starting with a three dimensional graph for example, then $B_f(x)$ is a one dimensional geometric 
graph, which is a finite union of cyclic graphs.  \\

This analysis leads to an index formula which holds for a general simple graph $G$:
\begin{equation}
\label{maingeneralformula}
j_f(x) = \frac{1}{2} [2-\chi(S(x)) - \chi(B_f(x))] \; , 
\end{equation}
where $B_f(x)$ is a graph defined by $G,f$ and $x$. For geometric graphs $G$, the graph $B_f(x)$ is 
geometric too of dimension $d-2$ and the formula simplifies to
\begin{equation}
j_f(x) = \frac{1}{2} [1+(-1)^d - \chi(B_f(x))] \; .
\label{mainformula}
\end{equation}
For odd dimensions $d$, it simplifies further to $j_f(x) = -\chi(B_f(x))/2=0$, 
while for even dimensions $d$, it becomes $j_f(x) = 1-\chi(B_f(x))/2$. \\

The formula~(\ref{mainformula}) holds classically for compact Riemannian manifolds $M$ if
$j_f(x)=[i_f(x)+i_{-f}(x)]/2$ for a Morse function $f$ on $M$,
where $i_f(x)$ is the classical index of the gradient vector field $\nabla f$ at $x$ and where
$S(x)=S_r(x)$ is a sufficiently small geodesic sphere of radius $r$. Here $r$ can depend
on $f$. The continuum version is simpler than in the discrete so that it can be given and
proved in this introduction: \\

{\it Let $(M,g)$ be a compact $d$-dimensional Riemannian manifold and let $f \in C^2(M)$ be 
a Morse function. If $x \in M$ is a critical point of $f$ with critical value $c=f(x)$, then 
for small enough $r>0$, the formula~(\ref{mainformula}) holds with $B_f(x) = S_r(x) \cap f^{-1}(c)$,
Also if $x$ is a regular point then the formula holds for small enough positive $r$ 
and gives $j_f(x)=0$.} \\

{\bf Proof:} 
if $x$ is a critical point of $f$, then $i_f(x) = (-1)^{m(x)}$, where $m(x)$ is the Morse index, 
the number of negative eigenvalues of the Hessian of $f$ at $x$. 
If $d$ is odd, then $B_f(x)$ is a $d-2$ dimensional manifold which must
have Euler characteristic $0$ and at a critical point $x$ we have $j_f(x)=[(-1)^{m(x)}+ (-1)^{d-m(x)}]/2 = 0$.  \\
If $d$ is even, then the Morse lemma tells that in suitable coordinates near $x$ that
the function is $f(x) = -x_1^2 - \dots - x_m^2 + x_{m+1}^2 + \cdots + x_d^2$ and
the level surfaces $B_f(x) = \{ f=0 \; \}$ are locally homeomorphic to quadrics intersected with small
spheres $S_r(x)$. The equations 
$2x_1^2 + \dots + 2 x_{m}^2=r^2,  x_{m+1}^2 + \cdots + x_d^2=r^2$ show that this is
$S_{m-1} \times S_{d-1-m}$ topologically. 
Therefore $\chi(B_f(x))=4$ or $\chi(B_f(x))=0$ depending on whether $m$ is 
odd or even.  This implies that $1-\chi(B_f(x))/2=-1$ if $m$ is odd and that it is equal to $1$ if $m$ is even.  
If $x$ is a regular point for $f$, then $B_f(x)$ is a $d-2$ dimensional sphere which has Euler characteristic $2$
and $1-\chi(B_f)/2=0$. An alternative proof uses a property of Euler characteristic 
and the Poincar\'e-Hopf index formula for $i_f(x)$: since 
$$ S(x) = \{ f(y) \leq f(x) \; \} \cup \{ f(y) = f(x) \; \} \cup \{ f(y) \leq f(x) \; \} \; , $$
we have by the inclusion-exclusion principle 
\begin{eqnarray*}
0=1-(-1)^d &=& \chi(S(x))=\chi(S^-_f(x))-\chi(B_f(x))+\chi(S^+_f(x)) \\
         &=& 1-i_f(x)+1-i_{-f(x)}-\chi(B_f(x))=2-2j(f)-\chi(B_f(x))  \; ,
\end{eqnarray*}
where $S^-_f(x) = \{ y \in S(x) \; | \; f(y) \leq f(x) \; \}$. (In the discrete graph case, the $\leq$ in $S^-_f(x)$ is
equivalent to  $<$ because $f$ is assumed to be injective).
Since $r(x)>0$ can be chosen to be continuous in $x$, there exists by compactness of $M$ for every $f$ an $r_f>0$ so 
that the index formula holds for all $x \in M$ if $0<r<r_f$.  \\

{\bf Examples:}  \\
{\bf 1)} For $d=2$ dimensional surfaces $M$, the space $B_f(x))$ is a discrete set on the circle $S_r(x)$, where $f$ is zero.
If we denote the cardinality with $s_f(x)$, the formula tells $j_f(x) = 1-s_f(x)/2$. 
For a Morse function on the  surface $M$, there are three possibilities: either $f$ is a maximum
or minimum, where $s(x)=0$ and $j_f(x)=1$ or $f$ has a saddle point in which case $s(x)=4$ and $j_f(x)=-1$.  \\
{\bf 2)} For a $3$-dimensional manifold, the formula tells $j_f(x) = -\chi(B_f(x))/2$ and $B_f(x)$ is either empty
or consists of two circles, the intersection of a $2$ dimensional cone with a sphere.
In either case, the Euler characteristic is zero. \\
{\bf 3)} Proceeding inductively, since $B_f(x)$ is $(d-2)$-dimensional, we get inductively 
with Poincar\'e-Hopf that all odd dimensional manifolds have zero Euler characteristic. \\
{\bf 4)} For a four dimensional manifold, where
$j_f(x) = 1-\chi(B_f(x))/2$, we either have that $B_f(x)$ is empty which implies $j_f(x)=1$
or that it is the union of a two $2$ dimensional spheres which implies $j_f(x)=-1$.  Or that
it is a single $2$ dimensional sphere so that $j_f(x) = 0$.  \\

In order to prove formula~(\ref{mainformula}) for graphs
it is necessary to define a discrete version of a {\bf hyper surface}
$G_f = \{ f = 0 \; \}$ in a graph. For any simple graph $G=(V,E)$ and any nonzero 
function $f$ on the vertex set there is such a graph $G_f$ or $G_f$ is empty. 
We see examples for some random graphs
in Figure~\ref{rwkgraph}. The idea is motivated but not equivalent to an idea used in statistical mechanics, 
where "contours" are used to separate regions of spins in a ferromagnetic graph. 
We use the edges of the graph which connect vertices for which $f$ takes different sign. 
In graph theory jargon, the graph $G_f$ is a subgraph of the line graph of $G$.
The mixed edges become the vertices of $G_f$ and are connected if they are 
in a common mixed triangle of $G$.  \\

What will be important for us is to see that if $G$ is geometric, then $G_f$ can be completed 
to become a geometric graph $G_f'$. For a $d$-dimensional geometric graph $G$, the new 
graph $G_f'$ is then $d-1$ dimensional. We will apply this construction to 
hyper surfaces $S(x)_f$ in the unit spheres $S(x)$ of a $d$ dimensional graph 
$G$ and call $B_f(x)$ the completion of $S(x)_f= A_f(x)$. Since $S(x)$ is $(d-1)$-dimensional, 
the graphs $B_f(x)$ are
$(d-2)$-dimensional. We see some pictures of $G_f(x)$ when $G$ is three-dimensional in Figures~\ref{onedimensionalcase},
and where $G$ is four dimensional in Figure~\ref{twodimensionalcase}.  \\

To conclude this introduction with the remark that that it is quite remarkable how close simple graphs
and Riemananian manifolds are. While in the previous papers on Gauss-Bonnet and Poincare-Hopf as well as 
in integral geometric representation of curvature, the discrete story is much simpler, we see here that
in the more geometric part, we have to work harder in the discrete case. The continuous analogue has been
proven above. But we should note that in the continuum, the above analysis relies partly on Morse theory. 

\begin{figure}
\scalebox{0.25}{\includegraphics{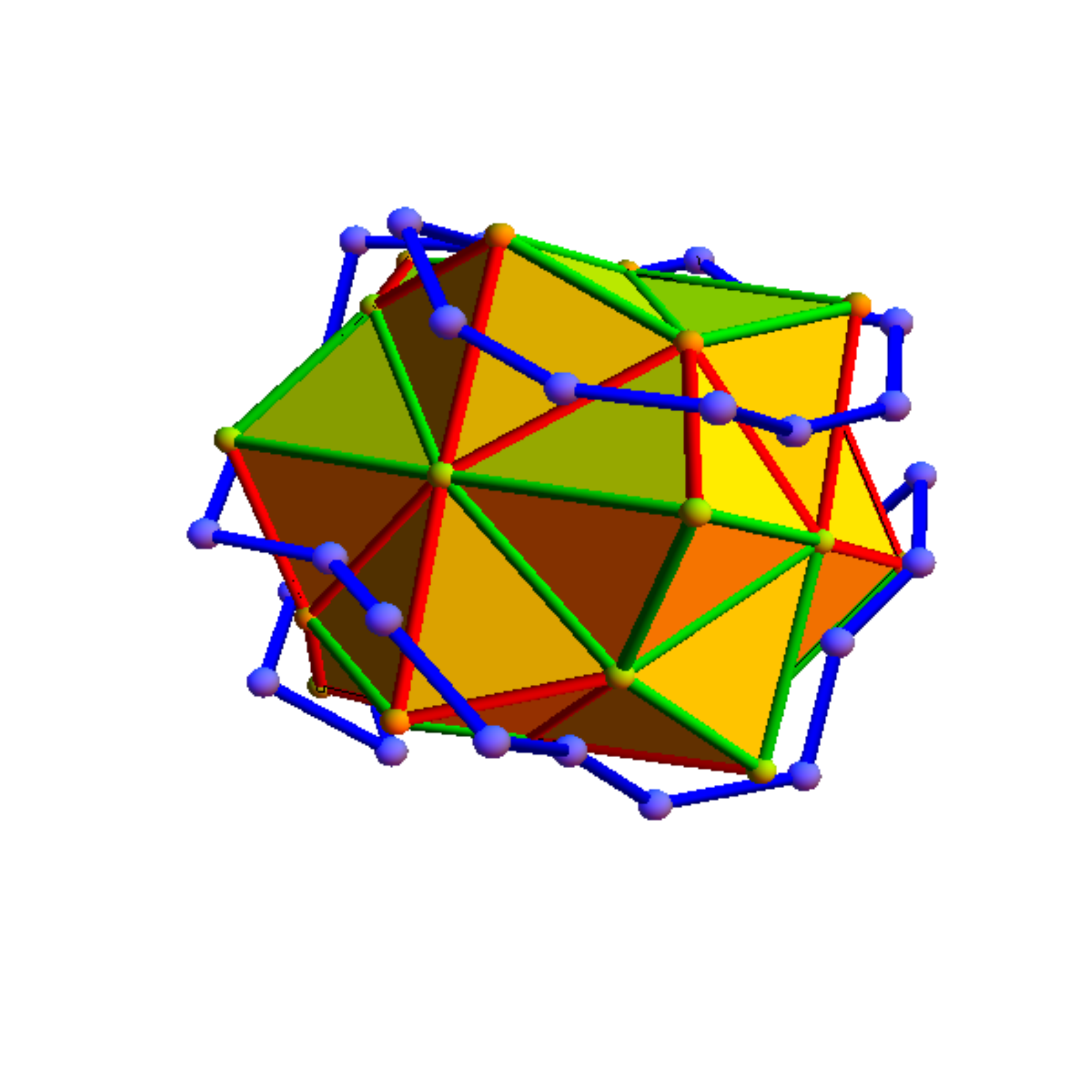}}
\scalebox{0.25}{\includegraphics{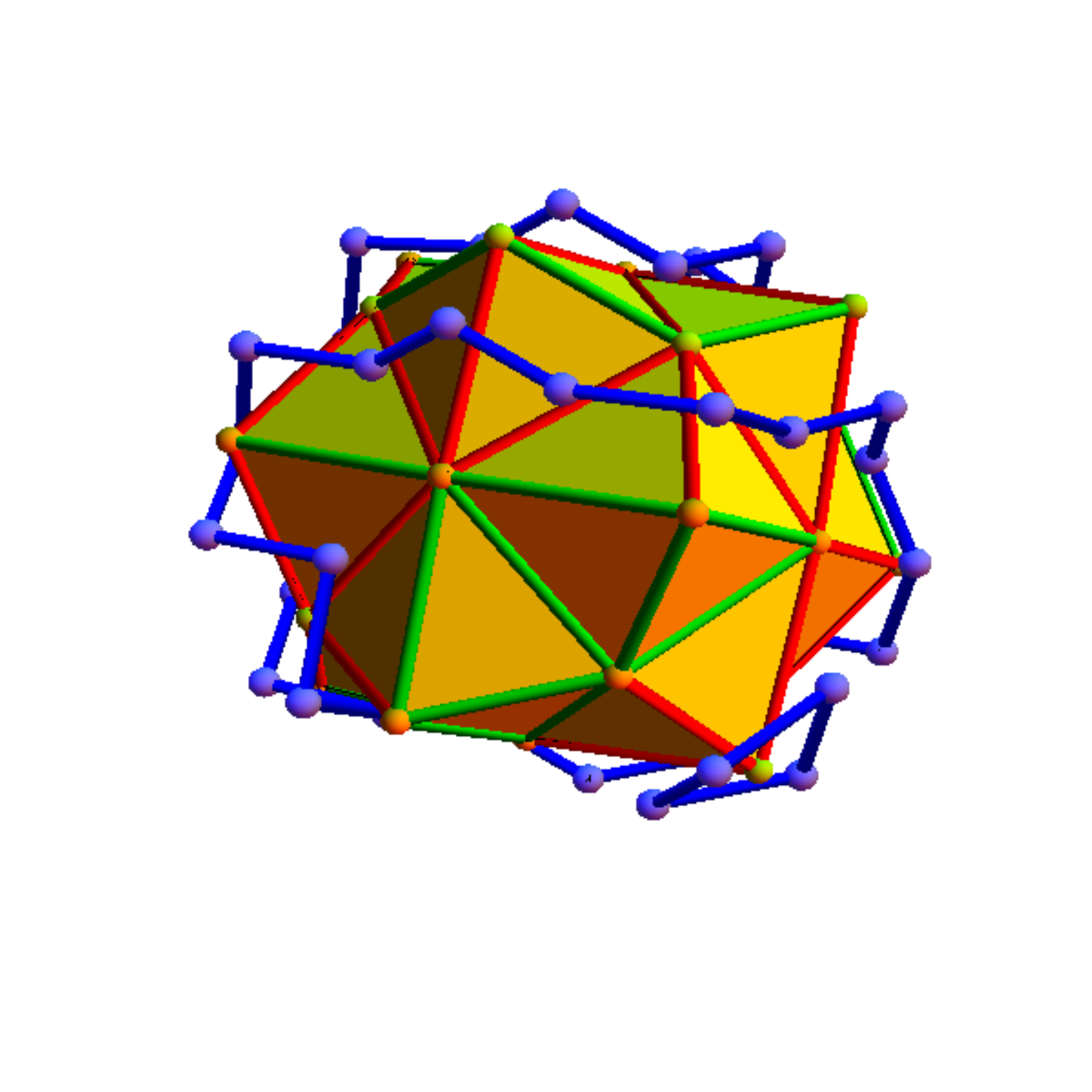}}
\caption{
Two examples of the one-dimensional polytop $B_f(x)$ for two random functions defined by the unit sphere of a 
three dimensional graph $G$. We see the two dimensional unit sphere $S(x)$ of a vertex $x$, 
where triangles have been filled in for clarity and above it the one dimensional graph $B_f(x)$. 
Every vertex of $B_f(x)$ belongs to mixed edges of $S(x)$ and
two vertices in $B_f(x)$ are connected, if they are part of a mixed triangle in $S(x)$. 
For odd dimensional graphs the Euler characteristic of $B_f(x)$ is proportional to the symmetric 
index $j_f(x)$ in the original graph. 
It is zero, implying that the three-dimensional graph $G$ has zero curvature. }
\label{onedimensionalcase}
\end{figure}

\begin{figure}
\scalebox{0.25}{\includegraphics{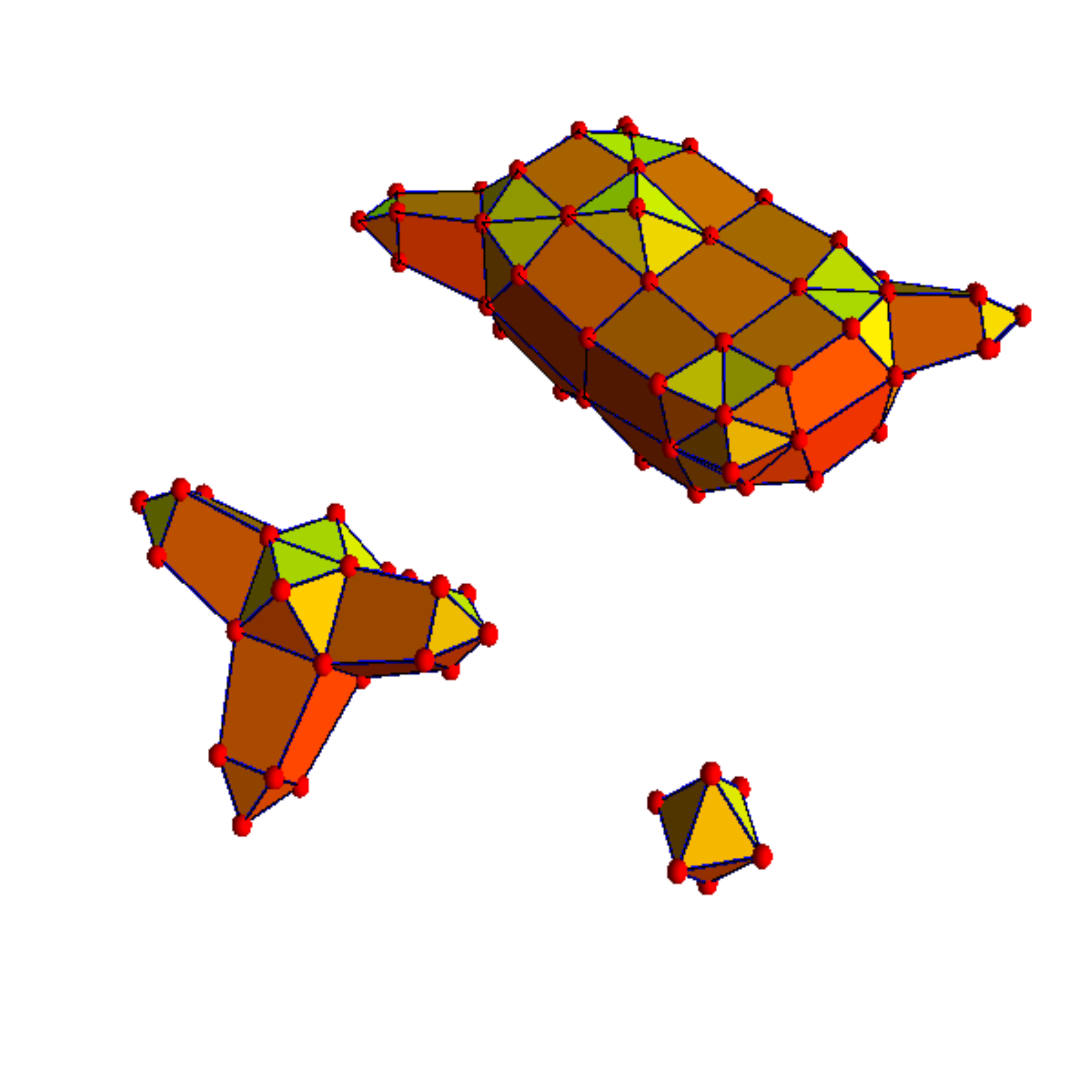}}
\scalebox{0.25}{\includegraphics{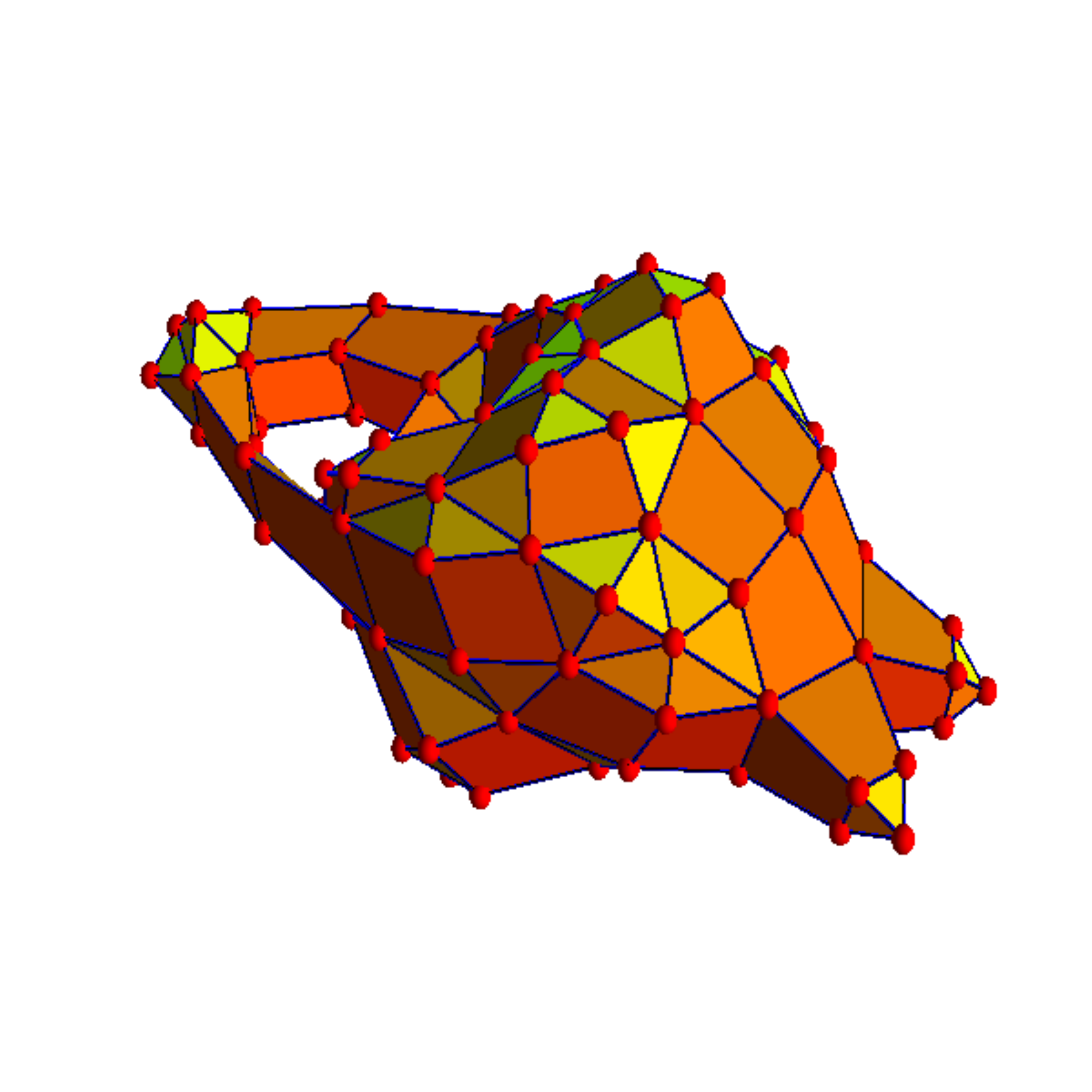}}
\caption{
Two examples of the polytop $B_f(x)$ for random functions defined on a four dimensional graph $G$.
The left example has three components and $\chi(B_f(x))=6$. The right example has one component and a 
hole so that $\chi(B_f(x))=0$. 
The three dimensional unit sphere $S(x)$ of a vertex $x$ is not displayed in this figure.
We only see examples of the "surface" $B_f(x)$. We have $j_f(x) = 1-\chi(B_f(x))/2$ and
the curvature $K(x)$ is the expectation $\E[j_f(x)]$ when integrating over the finite dimensional
parameter space of functions $f$, for which all $f(x_i)$ are independent random variables. }
\label{twodimensionalcase}
\end{figure}

\section{Geometric graphs}

In this section, we define geometric graphs inductively and formulate the main result.
The induction starts with one-dimensional graphs which are called geometric, if every unit 
sphere $S(x)$ in $G$ has 
Euler characteristic $2$. A one-dimensional geometric graph is therefore a finite union of cyclic graphs. 
For a two dimensional geometric graph, each sphere $S(x)$ must be a one dimensional cyclic graph. 
In general: 

\defn{
A graph is called a {\bf geometric graph} of dimension $d$, 
if the dimension ${\rm dim}(x)$ is constant $d$ for every vertex $v$ and
if each unit sphere $S(x)$ is a $(d-1)$-dimensional geometric graph 
satisfying  $\chi(S(x)) = 1-(-1)^d$. A one-dimensional graph is geometric
if every unit sphere has Euler characteristic $2$. \\ }

{\bf Remarks}. \\
{\bf 1)} We could assume connectivity of each sphere $S(x)$ for $d \geq 2$ in order to avoid Hensel type 
situations like gluing
two icosahedra at a single vertex. The unit sphere at this vertex would then consist of two 
disjoint cyclic graphs. But this does no matter for us in this paper, so that we do not require connectivity of $S(x)$.  \\
{\bf 2)} Geometric graphs resemble topological manifolds because their unit spheres have the same Euler characteristic as 
small spheres in a $d$-dimensional manifold. The octahedron and icosahedron are geometric graphs of dimension $d=2$.
While not pursued in this paper, it can be useful to strengthen the assumption a bit and assume that the unit spheres share more
properties from the continuum. Euler characteristic, dimension and connectedness are not sufficient. The following 
definition is motivated by the classical Reeb's theorem: 
a graph $G$ is called {\bf sphere-like} if it is the empty graph or if the minimal number $m(G)$ of critical points among all Morse 
functions is $2$ and inductively every unit sphere $S(x)$ in $G$ is sphere-like. 
Inductively, a geometric graph is called {\bf manifold like} if every unit sphere $S(x)$ is sphere like and manifold like.  
In dimensions $d \geq 4$, there are geometric graphs which are not manifold-like because their unit spheres can have the 
correct Euler characteristic different from a sphere. For us, it is enough to assume that the graph is geometric in the
sense hat we fix the dimension and the Euler characteristic as well as assume that $S(x)$ is 
geometric for every vertex $x \in V$. \\
{\bf 3)} A cyclic graph $C_n$ is geometric for $n>3$. The complete graphs $K_n$ are not geometric because i
the unit spheres have Euler characteristic $1$. From the 5 platonic solids, the octahedron and icosahedron are geometric.
The 600 cell is an example of a  three dimensional sphere type graph and so geometric 
because every unit sphere is an icosahedron which is sphere like. 
We can place $G$ into $R^4$ with an injective function $r: V \to R^4$. For most unit vectors $v$ the function $f(x) = v \cdot r(x)$
is injective with two critical points and $f$ restricted to the unit spheres has the same properties.

\begin{thm}[Zero index for odd dimensional geometric graphs]
For any odd-dimensional geometric graph $G=(V,E)$ and any injective function $f$ on $V$, 
the symmetric index function $j_f(x)$ is zero for all $x \in V$.
\label{zeroindex}
\end{thm}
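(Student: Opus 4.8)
The plan is to induct on the odd dimension $d$, using the general index formula~(\ref{maingeneralformula}) to rewrite $j_f(x)$ in terms of the Euler characteristic of the lower-dimensional graph $B_f(x)$, and then to feed the induction hypothesis back in through Poincar\'e-Hopf. The induction hypothesis at stage $d$ is that every odd-dimensional geometric graph of dimension strictly less than $d$ has $j_g\equiv 0$, and hence, since $\sum_y j_g(y)=\chi$, also $\chi=0$.

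For the base case $d=1$, each unit sphere $S(x)$ is a pair of isolated vertices, so $S(x)$ carries no edges, there are no mixed edges, and the hypersurface $A_f(x)=S(x)_f$ together with its completion $B_f(x)$ is empty. Since a one-dimensional geometric graph has $\chi(S(x))=1-(-1)^1=2$, formula~(\ref{maingeneralformula}) gives $j_f(x)=\tfrac12[2-2-\chi(\emptyset)]=0$. (One may also check the three cases directly: at a local maximum $i_f(x)=-1$ and $i_{-f}(x)=1$, at a local minimum the signs reverse, and at a regular vertex both indices vanish, so $j_f(x)=0$ throughout.)

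For the inductive step, let $G$ be geometric of odd dimension $d$ and let $f$ be injective on $V$. Geometricity gives $\chi(S(x))=1-(-1)^d=2$, so the bracket $2-\chi(S(x))$ in~(\ref{maingeneralformula}) vanishes and the formula collapses to $j_f(x)=-\chi(B_f(x))/2$, which is exactly~(\ref{mainformula}) with $1+(-1)^d=0$. Now $B_f(x)$ is the completion of the hypersurface $A_f(x)=S(x)_f$ sitting inside the unit sphere $S(x)$, and $S(x)$ is itself a $(d-1)$-dimensional geometric graph. Granting the structural fact that the completion of a hypersurface in a geometric graph is again a geometric graph of one lower dimension, $B_f(x)$ is geometric of dimension $(d-1)-1=d-2$. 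Because $d$ is odd, so is $d-2$; the induction hypothesis therefore applies to each component of $B_f(x)$ and yields $\chi(B_f(x))=0$ by additivity of Euler characteristic over components. Hence $j_f(x)=0$ for every $x$, and summing over $x$ via Poincar\'e-Hopf gives $\chi(G)=0$, which advances the induction to dimension $d$.

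The step I expect to be the real obstacle is the structural fact just invoked: that the completed hypersurface $B_f(x)$ is a genuine $(d-2)$-dimensional geometric graph, i.e. that each of its unit spheres is a $(d-3)$-dimensional geometric graph with Euler characteristic $1-(-1)^{d-2}$. In the continuum argument of the introduction this is immediate, since the Morse lemma and transversality make $B_f(x)=S_r(x)\cap f^{-1}(c)$ a smooth $(d-2)$-manifold. In the discrete setting one must instead analyze the line-graph construction defining $S(x)_f$ --- whose vertices are the mixed edges of $S(x)$ and whose edges come from mixed triangles --- and show that passing to the completion preserves the inductive geometric structure (constant dimension, correct Euler characteristic of unit spheres, and geometricity of those spheres). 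This requires the detailed combinatorics of mixed simplices in the geometric sphere $S(x)$ and is where all the genuine work lies; once it is established, the theorem follows formally from~(\ref{maingeneralformula}) and Poincar\'e-Hopf as above.
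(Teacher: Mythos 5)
Your proof is correct and follows essentially the same route as the paper: induction on the odd dimension $d$, collapsing the index formula to $j_f(x)=-\chi(B_f(x))/2$ via $\chi(S(x))=2$, and then using Poincar\'e--Hopf on the $(d-2)$-dimensional geometric graph $B_f(x)$ together with the induction hypothesis to get $\chi(B_f(x))=0$. The structural fact you flag as the real obstacle (that the completion $B_f(x)$ of $S(x)_f$ is genuinely geometric of dimension $d-2$) is likewise taken as established from the earlier sections in the paper's own proof, so your treatment matches it in both substance and level of detail.
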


The proof of Theorem~\ref{zeroindex} is given later after polytopes are introduced. 
The theorem implies the main goal of this article: 

\begin{coro}
Any odd dimensional geometric graph $G$ has constant zero curvature $K(x)=0$ and zero Euler 
characteristic.
\end{coro}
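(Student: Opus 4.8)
The plan is to derive the corollary directly from Theorem~\ref{zeroindex} together with the integral-geometric representation of curvature recalled in the introduction. The central identity to exploit is the index-expectation formula $K(x) = \E[i_f(x)]$ from \cite{indexexpectation}, which expresses the curvature at a vertex as the average of the discrete index $i_f(x) = 1 - \chi(S_f^-(x))$ over a probability space of injective functions $f$ on $V$.

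First I would record that this probability space can be taken invariant under the involution $f \mapsto -f$, for instance by drawing the values $f(x_i)$ as independent identically distributed random variables from a symmetric continuous law, so that $f$ and $-f$ are equidistributed. Consequently $\E[i_f(x)] = \E[i_{-f}(x)]$, and averaging the two representations gives
\begin{equation*}
K(x) = \frac{1}{2}\left(\E[i_f(x)] + \E[i_{-f}(x)]\right) = \E\!\left[\frac{i_f(x)+i_{-f}(x)}{2}\right] = \E[j_f(x)] .
\end{equation*}
Thus the curvature is the expectation of the \emph{symmetric} index $j_f(x)$.

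Next I would invoke Theorem~\ref{zeroindex}: since $G$ is odd-dimensional and geometric, $j_f(x) = 0$ for every injective $f$ and every vertex $x$. Because the integrand vanishes identically, its expectation vanishes, so $K(x) = \E[j_f(x)] = 0$ for all $x \in V$. This establishes the constant-zero-curvature claim. The vanishing of the Euler characteristic then follows at once from Gauss--Bonnet,
\begin{equation*}
\chi(G) = \sum_{x \in V} K(x) = 0 .
\end{equation*}

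I do not expect a genuine obstacle here, since the substantive work is carried out in Theorem~\ref{zeroindex}, which I may assume. The only point requiring care is the transition from the pointwise statement ``$j_f(x) = 0$ for all $f$'' to the vanishing of $K(x)$: this relies on the index-expectation identity and on the symmetry of the underlying probability space under $f \mapsto -f$. An alternative, expectation-free route would observe that $j_f(x)=0$ forces $i_{-f}(x) = -i_f(x)$ for every $f$; combined with $\E[i_{-f}(x)] = \E[i_f(x)] = K(x)$ from symmetry this gives $K(x) = -K(x)$ and hence $K(x)=0$ without explicitly averaging the symmetric index.
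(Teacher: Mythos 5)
Your proof is correct and follows essentially the same route as the paper: curvature equals index expectation, Theorem~\ref{zeroindex} kills the integrand, and Gauss--Bonnet then gives $\chi(G)=0$. The one refinement you add --- justifying the passage from $K(x)=\E[i_f(x)]$ to $K(x)=\E[j_f(x)]$ via the symmetry of the probability space under $f\mapsto -f$ --- is a detail the paper's two-line proof glosses over, and it is worth making explicit.
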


\begin{proof}
From the fact that curvature is equal to the index expectation \cite{indexexpectation} and since 
$j_f(x)$ is zero,  the curvature is zero. 
From Gauss-Bonnet or Poincar\'e-Hopf, we see that the Euler characteristic is zero.
\end{proof}

{\bf Remarks}.  \\
{\bf 1)} The result implies that for odd dimensional geometric graphs with boundary, the curvature is confined
to the boundary. For a one dimensional interval graph $I_n$ for example 
with $v=n+1$ vertices and $e=n$ edges with boundary $v_0,v_n$
the curvature is $1/2$ at the boundary points and adds up to $\chi(I_n)=v-e=1$. \\
{\bf 2)} The previous remark also shows that for odd dimensional orbigraphs, geometric graphs modulo an equivalence relation 
on vertices given by a finite group of graph automorphisms, the curvature is not necessarily zero in odd dimensions. 
Geometric $d$ dimensional graphs with $d-1$
dimensional boundary are special orbigraphs because the interior can be mirrored onto the "other side" leading to an orbigraph
defined by a $Z_2$ graph automorphism action. Because orbigraphs are just special simple graphs, 
curvature and Euler characteristic are defined as before and Gauss-Bonnet of holds as it is true for any simple graph. \\
{\bf 3)} The zero Euler characteristic result is known for triangularizations of odd dimensional 
manifolds by Poincar\'e duality or Morse theory. There are higher dimensional geometric graphs in the sense 
defined here which are not triangularizations of odd dimensional manifolds. 
For such exotic geometric graphs, the unit sphere has the correct
dimension and Euler characteristic but is not a triangularization of a sphere. 
For such geometric graphs, zero Euler characteristic does not directly follow from the continuum. 

\section{Polytopes}

Graphs like the cube or dodecahedron need to be completed to become $2$-dimensional. 
To do so, one can triangulate some polygonal faces.
A triangulation of a face means that a subgraph which is the boundary of a
ball is replaced with the ball. In two dimensions in particular, this means to replace
geometric cyclic subgraphs using pyramid extensions.  This changes the 
Euler characteristic. We call the polytop Euler characteristic 
the Euler characteristic of the as such completed graph.  \\

The $K_n$ graphs have dimension $n-1$ but are not geometric graphs. They can not be
completed in the above way. It can be desirable therefore to extend the operations on polyhedra a bit.
A tetrahedron $G$ for example is a three dimensional graph because
each unit sphere is a triangle which is two dimensional. 
It is not a geometric graph because the Euler characteristic of the unit sphere is 
always $1$.  If we truncate the vertices and replace each vertex with a triangle, 
we obtain a two dimensional graph which we can completed to become a two dimensional 
geometric graph. Such considerations are necessary when giving a comprehensive 
definition of "polytop" which is graph theoretical and agrees with 
established notions of "polytop". In this paper, graphs $K_n$ are not considered polytopes.

\begin{defn}
Given an arbitrary simple graph $G$, we can define a new graph by adding one vertex $z$ and 
adding vertices connecting $z$ with each vertex $v$ to $G$. The new graph is 
called a {\bf pyramid extension} of $G$.  
\end{defn}

{\bf Remarks.} \\
{\bf 1)} The order of the pyramid extension exceeds the order of $G$ by $1$ and the size of the 
extension exceeds the old size by the old order.  \\
{\bf 2)} The pyramid construction shows that any graph can appear as a unit sphere of a larger graph.

\begin{lemma}
The pyramid construction defines from a $d$-dimensional geometric graph $G$ a $d+1$ dimensional 
geometric graph. The extended graph always has Euler characteristic $1$. 
\end{lemma}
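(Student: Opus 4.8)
The plan is to argue by induction on the dimension $d$, treating the two assertions — that the pyramid extension $\hat{G} = G \cup \{z\}$ (with $z$ joined to every vertex of $G$) is $(d+1)$-dimensional geometric, and that $\chi(\hat{G}) = 1$ — in parallel, since both hinge on understanding the unit spheres of $\hat{G}$. First I would classify these spheres. The sphere $S_{\hat{G}}(z)$ is all of $G$, hence $d$-dimensional geometric by hypothesis. For an old vertex $v \in V(G)$, the $\hat{G}$-neighbors of $v$ are its $G$-neighbors together with the apex $z$; since $z$ is adjacent to every $G$-neighbor of $v$, the induced graph $S_{\hat{G}}(v)$ is precisely the pyramid extension of $S_G(v)$. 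As $S_G(v)$ is $(d-1)$-dimensional geometric, the induction hypothesis identifies $S_{\hat{G}}(v)$ as a $d$-dimensional geometric graph. Combining the two cases yields $\dim(x) = d+1$ at every vertex $x$ of $\hat{G}$ and sets up the inductive step for the geometric structure.

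The Euler characteristic claim I would establish directly, independently of the induction, by counting cliques. Every $K_{k+1}$-subgraph of $\hat{G}$ either lies in $G$, contributing $v_k$, or consists of the apex $z$ together with a $K_k$-subgraph of $G$, contributing $v_{k-1}$ (with $v_{-1} = 1$ accounting for $z$ itself when $k=0$). Hence the clique numbers satisfy $\hat{v}_k = v_k + v_{k-1}$. Substituting into $\chi(\hat{G}) = \sum_{k \geq 0} (-1)^k \hat{v}_k$ and reindexing the shifted sum telescopes everything to $\chi(\hat{G}) = \chi(G) + (1 - \chi(G)) = 1$. This is the cone/contractibility computation and is robust: it uses nothing about $G$ beyond its being a finite simple graph.

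The hard part — and the step I expect to be the main obstacle — is verifying the Euler-characteristic side condition $\chi(S_{\hat{G}}(x)) = 1 - (-1)^{d+1} = 1 + (-1)^d$ that the definition of a geometric graph demands of each unit sphere. At the apex this forces $\chi(G) = 1 + (-1)^d$, which is \emph{not} part of the bare geometric hypothesis but does hold once $G$ is sphere-like (and, for odd $d$, is exactly the content of Theorem~\ref{zeroindex}); so I would carry the induction under the sphere-like normalization, using it as the induction hypothesis to close the apex case. At an old vertex $v$, however, $S_{\hat{G}}(v)$ is a pyramid, whose Euler characteristic is $1$ by the computation above, so the condition reads $1 = 1 + (-1)^d$, which fails for every $d$. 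This is the genuine crux, and it reflects a structural fact rather than a gap in the argument: the pyramid over a geometric sphere is naturally a geometric \emph{ball}, for which $\chi = 1$ is exactly correct, and the conclusion is properly read in the category of geometric graphs with boundary, the interior vertices carrying full $d$-spheres and the boundary vertices carrying the cone (half-sphere) links. The uniform, hypothesis-free content surviving in all cases is the identity $\chi(\hat{G}) = 1$, which is the form in which the lemma is used for completing and triangulating faces.
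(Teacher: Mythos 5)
Your clique count is exactly the paper's proof of the Euler characteristic claim: the paper records the same relation $\hat v_k = v_k + v_{k-1}$ (with $v_{-1}=1$) and telescopes to $\chi(G')=\chi(G)+1-\chi(G)=1$. That is, in fact, the \emph{entire} proof given in the paper; the first assertion of the lemma, that the extension is a $(d+1)$-dimensional \emph{geometric} graph, is not argued there at all. Your analysis of the unit spheres of the extension is therefore a genuine addition, and your diagnosis of the crux is correct: at an old vertex $v$ the sphere $S_{\hat G}(v)$ is the pyramid over $S_G(v)$ and so has Euler characteristic $1$, which can never equal the required $1+(-1)^d$, while at the apex the condition forces $\chi(G)=1+(-1)^d$, which the bare definition of geometric does not supply. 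So the lemma as literally stated is incompatible with the paper's own definition of geometric graph; what the construction actually produces is a $(d+1)$-dimensional cone --- what the paper later calls a face, or a geometric graph with boundary --- and the hypothesis-free content is $\chi=1$ together with the dimension count $\dim(x)=d+1$ at every vertex, which your induction does establish. Since this is precisely the form in which the lemma is used downstream (completion steps, and the remark that every $d$-dimensional face has Euler characteristic $1$), your reading is the right repair of the statement rather than a gap in your argument.
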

\begin{proof}
The construction changes the cardinalities as follows
$v_0$ changes to $v_0+1$ and $v_1$ changes to $v_1+v_0$, $v_2$ becomes $v_2 +v_1$ etc until 
$v_{d}$ becomes $v_d+v_{d-1}$ and $v_{d+1}=0$ becomes $v_d$. \\ 
Therefore,
$$ \chi(G') = \chi(G) + 1-(v_0-v_1 + \dots  + (-1)^d v_d) = 1 \; . $$
\end{proof}

{\bf Examples}. The pyramid construction of a zero dimensional discrete graph $P_n$ is a star 
graph, a tree. The pyramid construction of a cycle graph $C_n$ is a wheel graph $W_n$.
The pyramid construction of the complete graph $K_n$ is the complete graph $K_{n+1}$. 

\begin{defn}
A graph $G$ is called a {\bf $d$-dimensional polytop}, if we can find a finite sequence
of completion steps such that the end product is a $d$-dimensional geometric graph 
$G'$ called the {\bf completion} of $G$. We require that the end product $G'$ does not depend
on the completion process. 
A single {\bf completion step} takes a $0<k<d$ dimensional geometric subgraph of $G$ and
performs a pyramid construction so that it becomes a $k+1$ dimensional geometric graph. 
\end{defn}

{\bf Remarks.}  \\
{\bf 1)} The subgraphs considered in the completion step need to have dimension smaller than $d$ because
otherwise, we could take any already $d$-dimensional geometric graph and add an other completion step
and get a geometric graph of dimension $d+1$.  \\
{\bf 2)} The completion of a geometric graph is the graph itself. This can be seen by induction
on dimension, because if we complete a subgraph, then we complete each unit sphere in that subgraph. 
In other words, a graph in which we can make a completion step is not geometric yet. \\
{\bf 3)} It follows that the completion of a completed graph $G'$ is equal to $G'$. \\

\begin{defn}
The {\bf polytop Euler characteristic} of a polytop $G$ is defined 
as the Euler characteristic of the completed graph $G'$. 
\end{defn}

{\bf Examples.}  \\
{\bf 1)} The polytop Euler characteristic of the cube is $2$ because the completed graph
has $v=14$ vertices, $e=36$ edges and $f=24$ faces and $v_0-v_1+v_2=v-e+f=2$.
The graph Euler characteristic was $8-12=-4$ due to the lack of triangles.
The $6$ squares are $k=1$-dimensional subgraphs.
The completion has added $6$ faces of Euler characteristic $1$ and we can count 
$f_0-f_1+f_2=8-12+6=2$. This is also an example to Proposition~\ref{propo5}. \\
{\bf 2)} A wheel graph $W_k$ with $k \geq 4$ is a two-dimensional polytop because we can make a pyramid construction 
on the boundary which leads to a polyhedron. For $G=W_4$, the completed graph $G'$ is the  octahedron.
Because the subgraph $C_4$ which was completed is odd dimensional, the completion has increased the Euler
characteristic to $2$. \\
{\bf 3)} A two-dimensional graph $G$ is called a geometric graph with boundary, if a vertex is either
an interior point, where the unit sphere $S(x)$ is a cyclic graph or a boundary point, for
which the unit sphere $S(x)$ is an interval graph. The boundary of $G$ is a one dimensional graph, 
a union of finitely many cyclic graphs. We can replace each of these with a corresponding wheel 
graph to get a geometric graph without boundary or what we call a geometric graph for short. \\
{\bf 4)} The complete graph $K_n$ and especially the triangle $K_3$ is not a polytop in the above sense. 
To enlarge the class of polytopes (but not done in this paper) if we would have to allow first the truncation 
of  corners. \\
{\bf 5)} A hypercube $K_2^4$ initially has $v=16$ vertices and $e=32$ edges but no triangles 
nor tetrahedral parts, so that the graph Euler characteristic is $v-e=-16$. To complete it, replace 
each of the $8$ cubes with its completed two dimensional versions, then add $8$ central points each reaching
out to the $14$ vertices of the completed two dimensional cubes,
to get a $3$ dimensional geometric graph. The polytop Euler characteristic is $v-e+f-c=0$ with
$v=48, e=240, f=384, c=192$. We could have computed the polytop Euler characteristic more quickly
by counting faces. There are $v=16$ vertices, $e=32$ edges, and $24$ square faces and $8$ cubic faces.
The polytop Euler characteristic is again $16-32+24-8=0$.  \\

\section{Product graphs}

In this section we look at graph products of complete graphs $K_n$ and show in Proposition~\ref{productlemma}
that if both have positive dimension, then the product graph is a polytop. 

\begin{defn}
The {\bf graph product} $G \times H$ of two graphs $G,H$ is a graph which has as the vertex set 
the Cartesian product of the vertex sets of $G$ and $H$ and where two new vertices 
$(v_1,w_1)$ and $(v_2,w_2)$ are connected if either \\
   {\bf i)} $v_1=v_2$ and $w_1,w_2$ are connected in $H$ or  \\
   {\bf ii)} $w_1=w_2$ and $v_1,v_2$ are connected in $G$. 
\end{defn}

{\bf Examples.} \\
{\bf 1)} Figure \ref{productgraph} shows for example the prism $K_2 \times K_3$. \\
{\bf 2)} The product graph of two cyclic graphs $C_k \times C_l$ with $k,l \geq 4$ 
is called a grid graph. It is a graph $G$ of constant curvature $-1$ with Euler characteristic
$\chi(G) = -kl$. It can be completed by stellating the square faces 
to become a two dimensional graph with Euler characteristic $0$. 

\begin{figure}
\scalebox{0.25}{\includegraphics{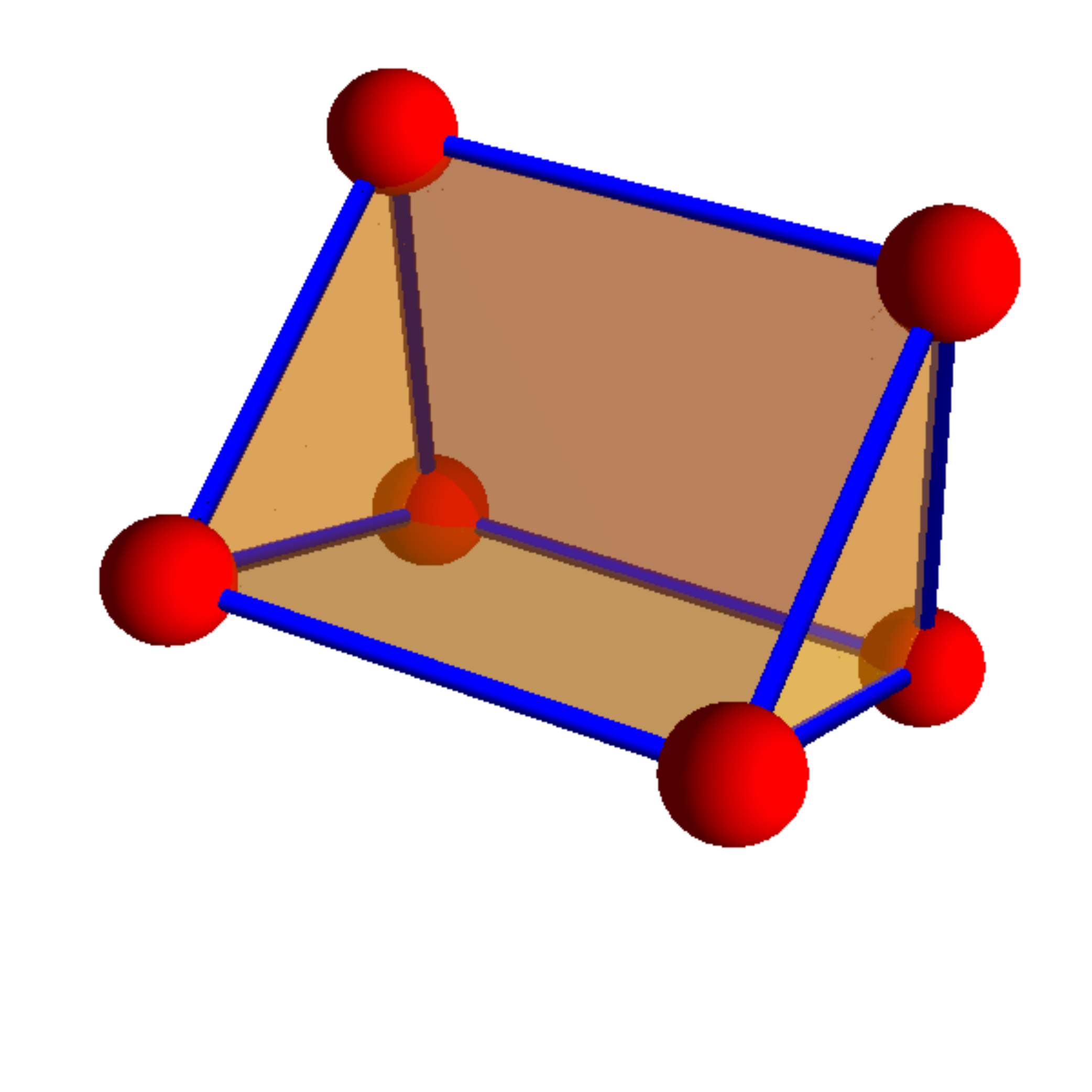}}
\scalebox{0.25}{\includegraphics{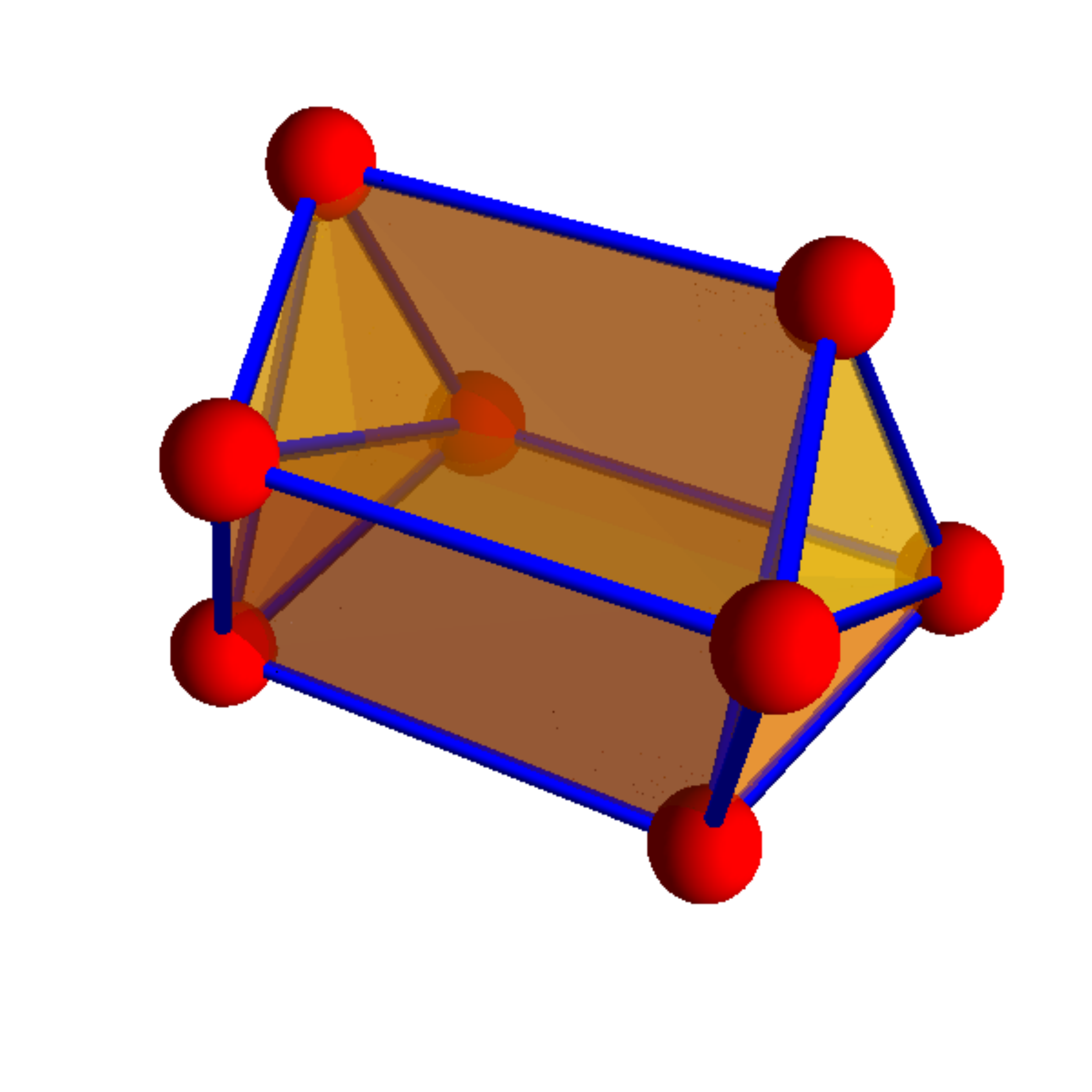}}
\caption{
The left figure shows the graph $K_2 \times K_3$ is a two-dimensional polytop. The 3 faces with 4 vertices can 
be completed. The right figure shows the graph $K_2 \times K_4$ which is a three dimensional polytop. 
To complete it, first complete the quadrilaterals, then complete the three dimensional spaces. 
\label{productgraph}
}
\end{figure}

\defn{
A graph is called a {\bf $d$-dimensional face} if it appears as a unit ball 
$B_1(x)$ of a geometric $d$-dimensional graph $G$. \\} 

{\bf Remark.} \\
Since $B_1(x)$ is the pyramid construction of $S(x)$, a $d$ dimensional face 
always has Euler characteristic $1$.  \\

{\bf Examples}. \\
{\bf 1)} Every complete graph $K_{d+1}$ is a $d$-dimensional face.  \\
{\bf 2)} Given a $d-1$ dimensional geometric graph $G$,
then its pyramid extension $G'$ is a $d$-dimensional face.  \\
{\bf 3)} Every wheel graph $W_n$ is a two dimensional face. Any two dimensional face is either
a triangle $K_3$ or a wheel graph $W_n$ with $n \geq 4$.  \\

The next lemma tells that the product graph of two complete graphs is a polytop and that then an
other pyramid construction of the completion renders it a face. 

\begin{propo}
\label{productlemma}
For $l,m>0$, the completion $K'$ of the graph $K_{l+1} \times K_{m+1}$ is a $d=l+m-1$-dimensional polytop
of graph Euler characteristic $1-(-1)^d$. A further pyramid construction produces a $l+m$-dimensional face, a
graph $\overline{K}'$ of Euler characteristic $1$. If $l=0$ or $m=0$, then $K_{l+1} \times K_{m+1}$ is 
already a $l+m$-dimensional face of Euler characteristic $1$. 
\end{propo}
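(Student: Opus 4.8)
The plan is to identify $K_{l+1}\times K_{m+1}$ with the edge graph of the convex polytope $P=\Delta^{l}\times\Delta^{m}$, the product of an $l$-simplex and an $m$-simplex, and to recognise its completion as a triangulation of the boundary sphere $\partial P\cong S^{l+m-1}$. First I would clear the degenerate cases: since $K_1$ is a single vertex, $K_{l+1}\times K_{1}=K_{l+1}$, which is by definition an $l$-dimensional face of Euler characteristic $1$, and symmetrically for $l=0$; this yields the last sentence. For $l,m>0$ I would check the elementary fact that a clique of the rook graph $K_{l+1}\times K_{m+1}$ lies entirely in one row or in one column, so that the cliques are exactly the faces of $P$ of the form (vertex)$\times$(simplex) or (simplex)$\times$(vertex). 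Hence the clique complex already carries every \emph{simplicial} face of $\partial P$, and the cells still to be filled are precisely the non-simplicial faces $\Delta(A)\times\Delta(B)$ with $|A|,|B|\geq 2$; the smallest of these are the empty squares $C_4$ produced by a choice of two rows and two columns.

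Next I would describe the completion as the stellar subdivision of $\partial P$: each non-simplicial face is coned to a new centre vertex over its already-completed boundary, the faces being treated in order of increasing dimension. The first step turns every square $C_4$ into a wheel $W_4$; the next cones the (now triangulated) boundary of each non-simplicial $3$-cell, and so on, each step being a pyramid construction applied to a geometric sphere of dimension $k$ with $0<k<l+m-1$, exactly as the definition of a completion step demands. I would check that operations on faces of equal dimension act on disjoint relative interiors and therefore commute, so that the end product $K'$ does not depend on the order; this is what is needed for $K'$ to be a polytop in the sense of the paper.

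The technical heart is to prove that $K'$ is geometric of dimension $d=l+m-1$. My approach would be to observe that $\partial P$ is a polytopal sphere, hence a combinatorial $(l+m-1)$-sphere, that a stellar subdivision carries a combinatorial sphere to a combinatorial sphere, and that a combinatorial $n$-sphere is always a geometric graph: its vertex links are combinatorial $(n-1)$-spheres, which by induction on $n$ are geometric and have the required Euler characteristic $\chi(S^{n-1})=1-(-1)^{n}$. The hard part, and the step where one must be careful, is that geometricity has to be verified at \emph{every} vertex of $K'$, including the original rook vertices shared by many cells. Here the instructive subtlety is that the link of an original vertex $(i,j)$ is \emph{not} the join $\Delta^{l-1}\ast\Delta^{m-1}=\Delta^{l+m-1}$ predicted by the abstract vertex figure: in the graph the $l$ column neighbours form a $K_l$ and the $m$ row neighbours a $K_m$ with no edges between them, and it is exactly the centres of the incident squares and higher non-simplicial faces that subdivide this disjoint union into a genuine $(l+m-2)$-sphere. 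For instance, in the prism $K_2\times K_3$ the link of an original vertex comes out as the cycle $C_5$. Making this global link analysis completely internal to the inductive framework of the paper, rather than importing the PL fact about links in subdivided spheres, is where the real work lies.

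Finally I would compute the Euler characteristic directly from the face numbers of $\partial P$. Writing a face as $\Delta(A)\times\Delta(B)$ with $A,B$ nonempty and using $\sum_{\emptyset\neq A}(-1)^{|A|-1}=1$, the alternating sum over all faces of $P$ is $1$, and removing the top cell of dimension $l+m$ leaves $\chi(K')=\chi(\partial P)=1-(-1)^{l+m}=1+(-1)^{d}=\chi(S^{l+m-1})$ with $d=l+m-1$, a value unchanged by stellar subdivision. This is exactly the Euler characteristic that a unit sphere of an $(l+m)$-dimensional geometric graph must have, so one further pyramid construction, coning $K'$ to a single apex, produces by the pyramid lemma above a geometric $(l+m)$-dimensional face $\overline{K}'$, automatically of Euler characteristic $1$. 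The two load-bearing points throughout are the order-independence of the completion and the sphericity of the subdivided links; the rest is bookkeeping with the product structure.
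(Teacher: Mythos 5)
Your argument is correct in substance but follows a genuinely different route from the paper. The paper proves the proposition by a terse double induction on $(l,m)$: the base case is $K_2\times K_2=C_4$, and for the step from $(l,m)$ to $(l+1,m)$ one completes each subproduct $H\times K_{m+1}$ with $H=K_{l+1}\subset K_{l+2}$ to an $(l+m)$-dimensional face by the inductive hypothesis and then asserts that these faces assemble into an $(l+m)$-dimensional geometric graph whose further pyramid extension is a face; the verification of the unit-sphere condition at the vertices shared by several such faces is left implicit. You instead identify $K_{l+1}\times K_{m+1}$ with the $1$-skeleton of $\partial(\Delta^l\times\Delta^m)$, realize the completion as the stellar subdivision of the non-simplicial faces in order of increasing dimension, and obtain geometricity from the fact that links in a stellar subdivision of a combinatorial sphere are again combinatorial spheres. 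What your route buys is exactly the point the paper elides: an explicit handle on the link of an \emph{original} vertex (your $C_5$ computation in the completed prism is correct and is the instructive case), together with a closed-form Euler characteristic via $\sum_{\emptyset\ne A}(-1)^{|A|-1}=1$. What it costs is the importation of PL facts (combinatorial sphere, behaviour of links under subdivision) that sit outside the paper's purely graph-theoretic inductive framework --- a cost you candidly acknowledge. One further point worth recording: your computation gives $\chi(K')=1-(-1)^{l+m}=1+(-1)^d$ with $d=l+m-1$, not the $1-(-1)^d$ of the statement; your value is the correct one (check $C_4$ with $\chi=0$, $d=1$, or the completed prism with $\chi=2$, $d=2$), it is the Euler characteristic required of a unit sphere in an $(l+m)$-dimensional geometric graph, and it is what makes the final pyramid step produce a face, so the exponent in the statement should read $l+m$ rather than $d$.
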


\begin{proof}
For $l,m=1$, we get $K_2 \times K_2$, where the product is a one dimensional square $C_4$ which is
already geometric. It is a polytop because the completion is unique.  A pyramid construction produces 
the $2$ dimensional wheel graph $W_4$.   \\
To show the claim for general $l,m$ we proceed by induction. Assume it is proven for $(l,m)$ 
it is enough to cover the case $(l+1,m)$ since the other case $(l,m+1)$ is similar.
For any subgraph $H=K_{l+1}$ of $K_{l+2}$, we can by induction uniquely complete $H \times K_{m+1}$ to
become a $l+m$ dimensional face. All these faces are part a $l+m$ dimensional geometric graph. A further
pyramid construction produces a $l+m+1$ dimensional face of Euler characteristic $1$. 
\end{proof}

The examples $K_2 \times K_3$ and $K_2 \times K_4$ are shown in Figure~(\ref{productgraph}).
The graph $K_2 \times K_3$ is a prism, a two dimensional geometric graph. 
A pyramid construction renders it into a three dimensional face. \\

We will now count faces in the $(d-2)$-dimensional geometric graph $B_f(x)$ 
which are obtained from a $d$-dimensional graph $G=(V,E)$ and
a function $f: V \to {\bf R}$. The graph $B_f(x)$ is  made of faces obtained from products
$K_k \times K_l$, where $k+l=d-2$.  For example, if 
$G$ is four dimensional, then each face in $B_f(x)$ consists of triangles $K_3 \times K_1$ and squares 
$K_2 \times K_2$. The dimension of $B_f(x)$ is $2$ and the Euler characteristic of each face is 
$1$. \\

\begin{figure}
\scalebox{0.25}{\includegraphics{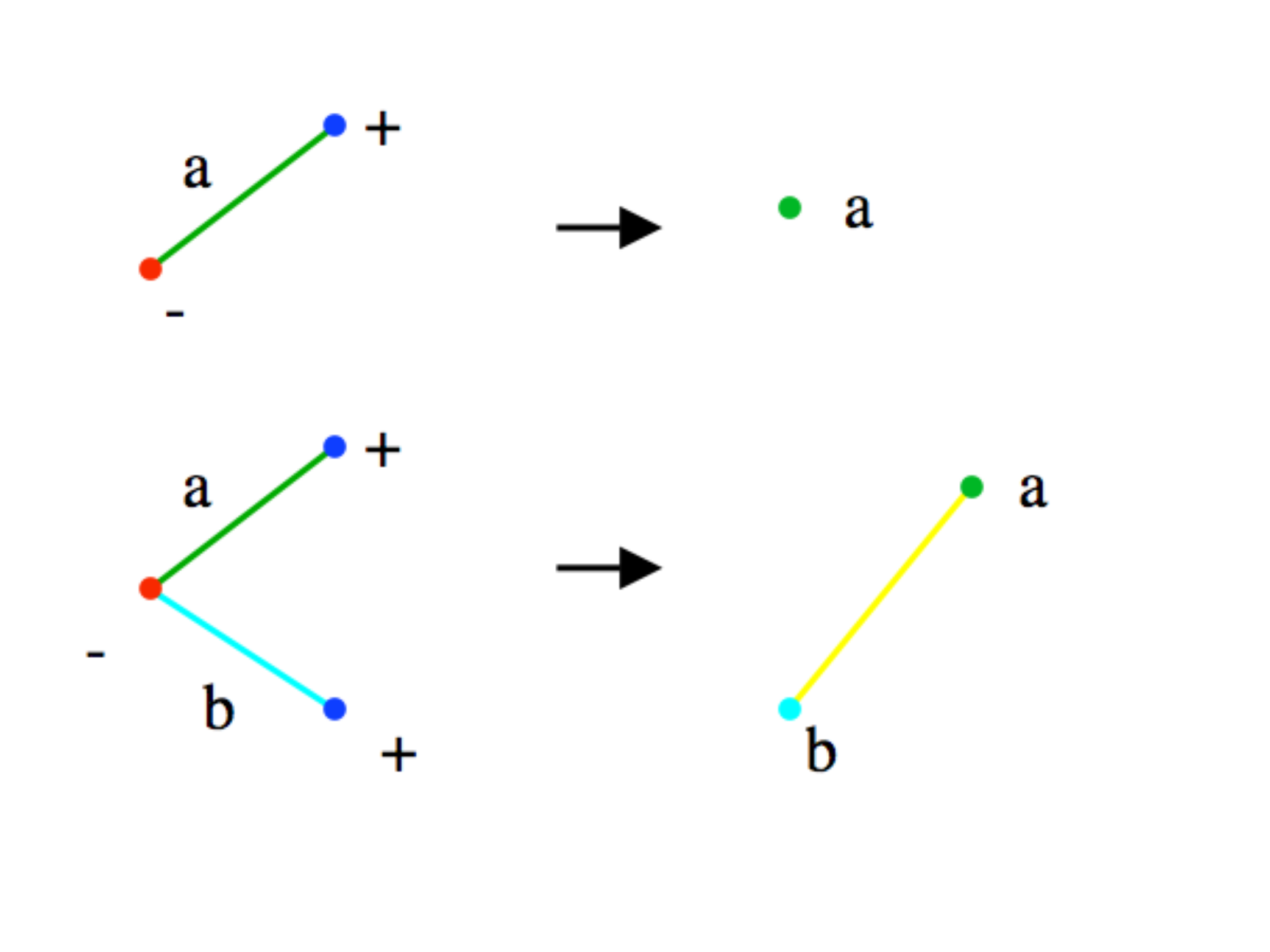}}
\scalebox{0.25}{\includegraphics{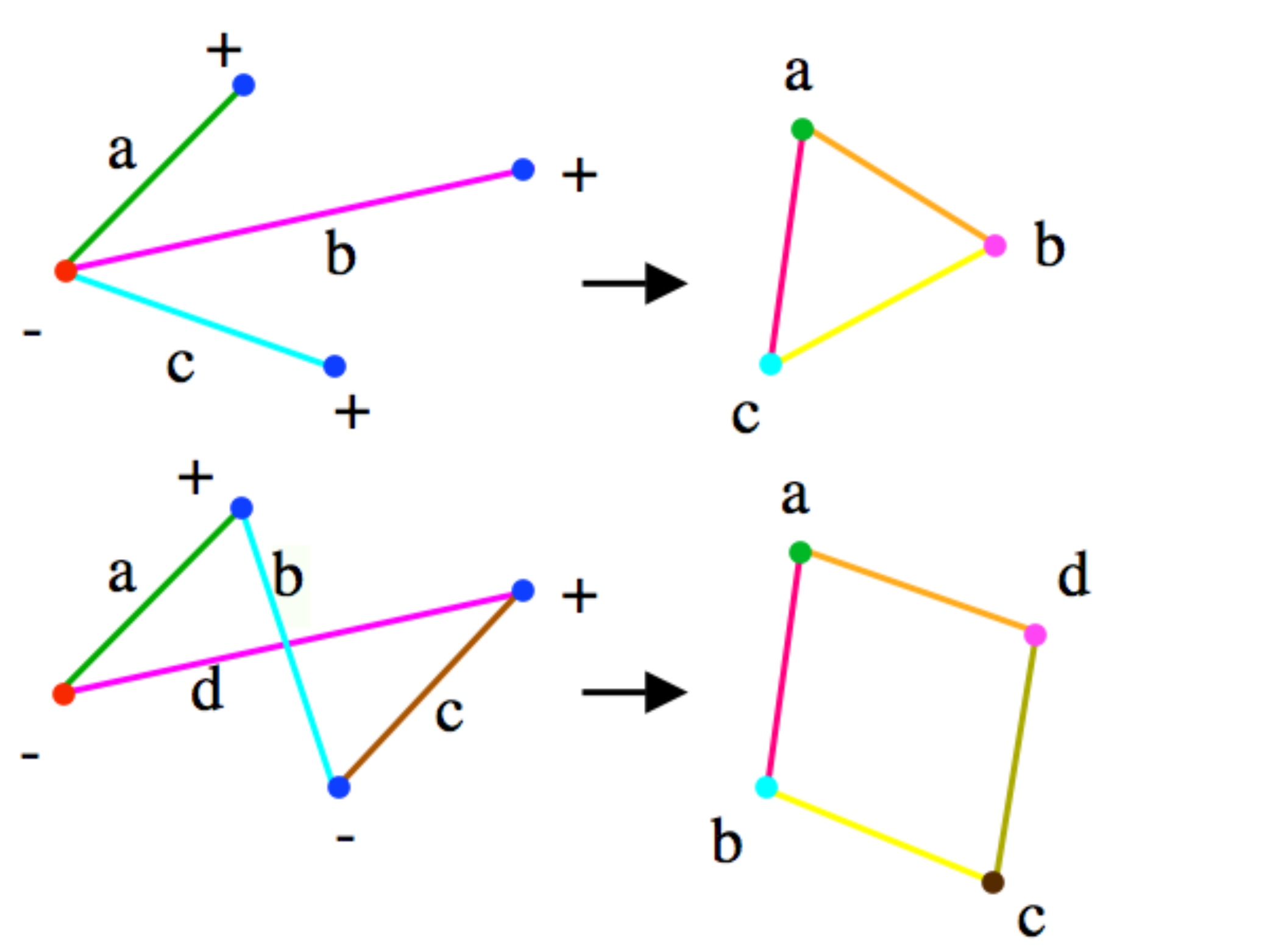}}
\caption{
The left upper figure illustrates that every edge in $G$ defines a point in $G_f$. 
The left lower figure illustrates that a mixed triangle in $G$ defines an edge in $G_f$. 
The upper right figure shows the situation when only one of the vertices in the tetrahedron $K_4$ have a different
sign. This leads to a triangle in $G_f$. The lower right figure shows that if two have positive and 
two have negative sign, then we get a square $K_2 \times K_2$ in $G_f$.
We see in figure~(\ref{twodimensionalcase}) examples of polyhedra which were obtained like that. 
All faces are triangles $K_3 \times K_1=K_3$ or squares $K_2 \times K_2$. }
\label{w2andw3}
\end{figure}

\begin{figure}
\scalebox{0.25}{\includegraphics{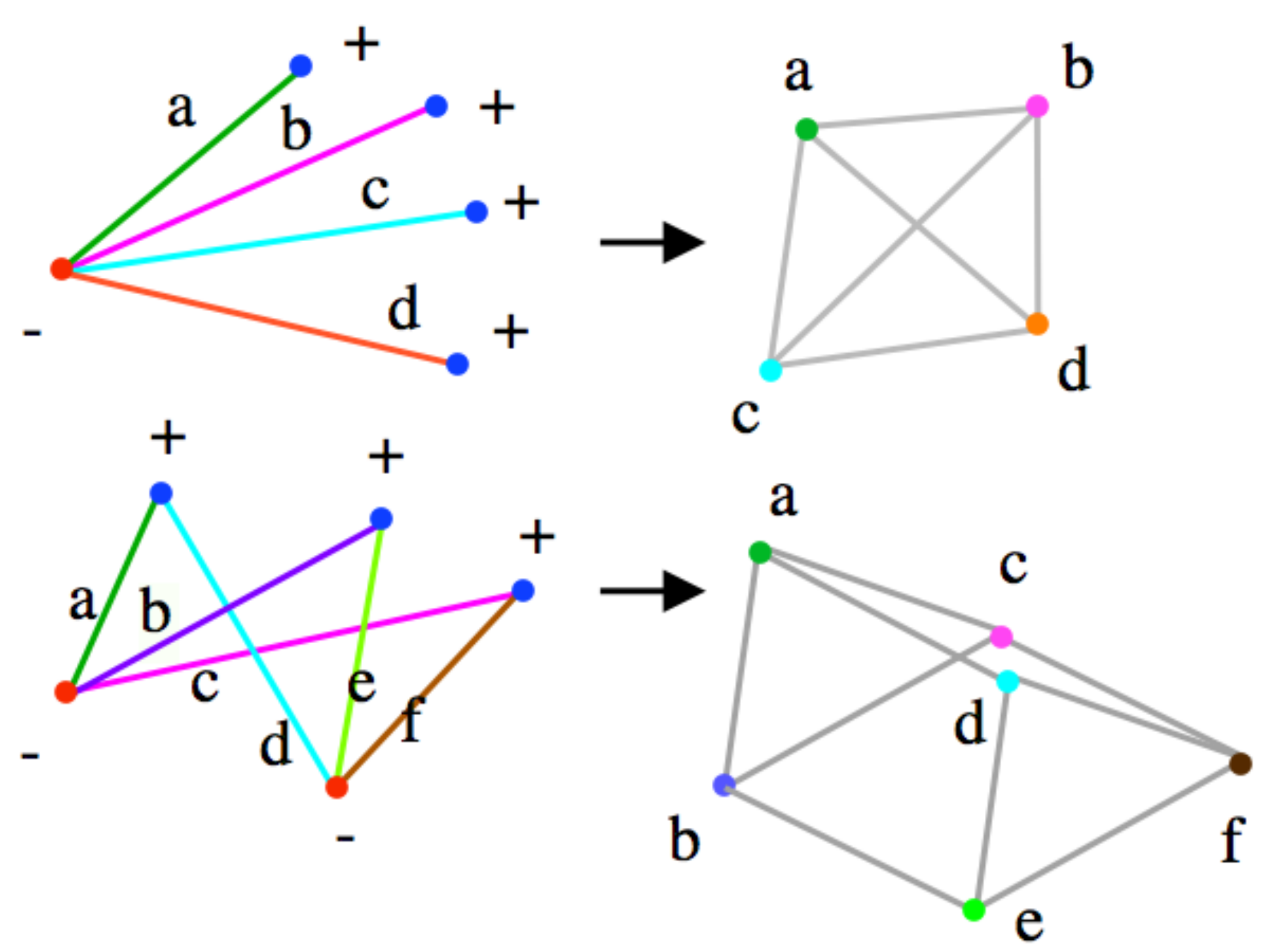}}
\scalebox{0.25}{\includegraphics{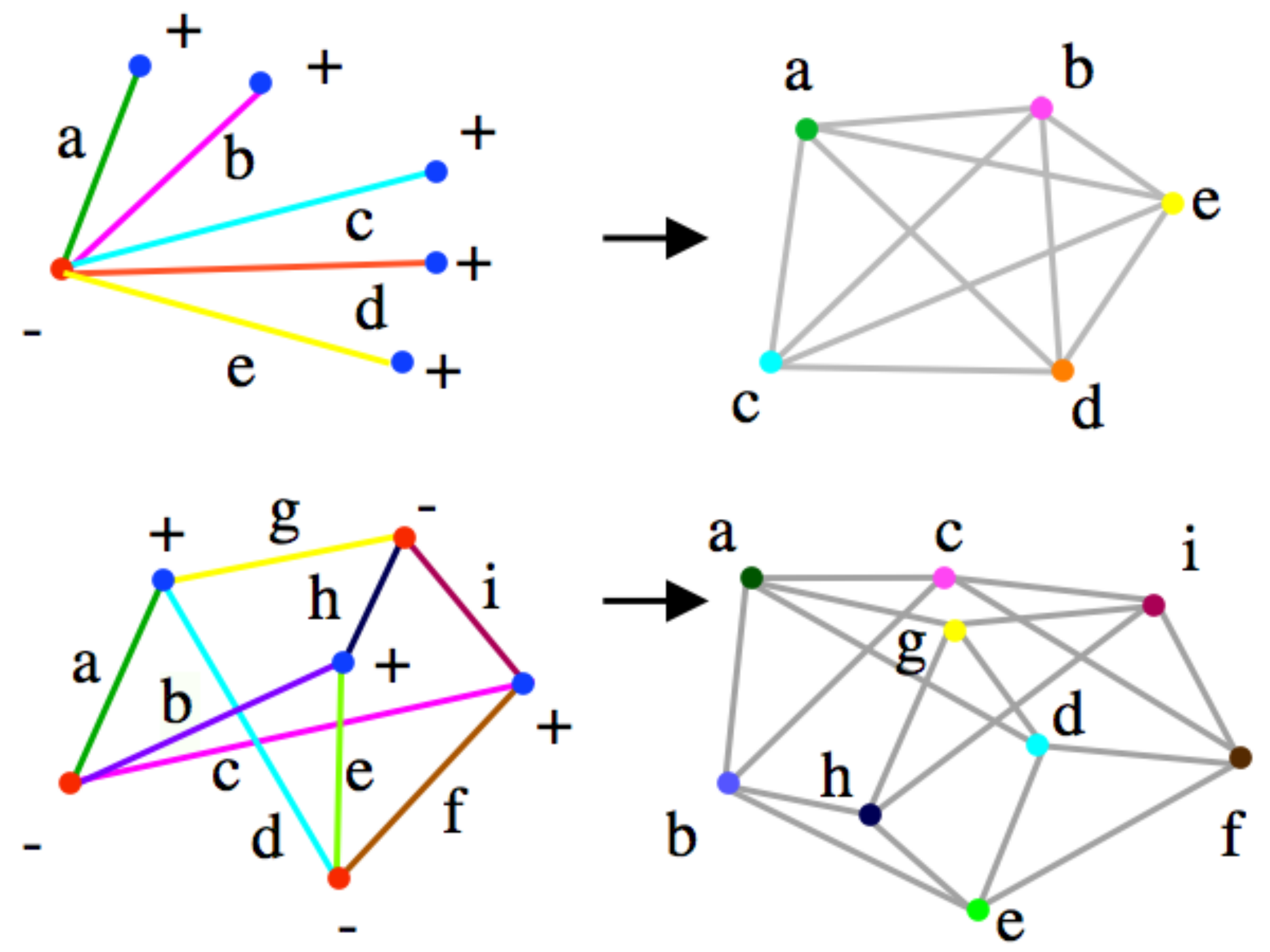}}
\caption{
The $W_4$ subgraphs $K_{5}$ in $S(x)$ define either tetrahedra $K_4 \times K_1$ or
prisms $K_3 \times K_2$.  The $W_5$ subgraphs $K_6$ in $S(x)$ define either 
$K_5 \times K_1$ graphs or prisms $K_4 \times K_2$ (not shown) or a product $K_3 \times K_3$
which is a graph of order $9$. }
\label{w2andw3}
\end{figure}

If $G$ is a polytop with completion $G'$, the polytop Euler characteristic of 
$G$ was defined as $\chi(G')$. The later can also be computed by counting faces: 

\begin{propo}
Assume $G$ is a polytop with completion $G'$ and assume that $G'$ is even
dimensional. If $f_k$ is the number of $k$ dimensional faces in $G'$, then 
$$  \chi(G') = \sum_{i=0}^{\infty} (-1)^k f_k \; . $$
\label{propo5}
\end{propo}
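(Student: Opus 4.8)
The plan is to recognize formula as the discrete analogue of the Euler--Poincar\'e relation and to prove it by reassembling $G'$ from its completed cells one face at a time, using only the additivity of the Euler characteristic together with the fact that a completed cell is a pyramid. First I would list the faces of $G'$ in order of nondecreasing dimension and form the increasing chain of subgraphs
$$ \emptyset = H_{-1} \subseteq H_0 \subseteq H_1 \subseteq \cdots \subseteq H_N = G' \; , $$
where passing from one term to the next adjoins the completed closed cell $\bar{F}$ belonging to the next face $F$. Because the cells are ordered by dimension, by the time $\bar{F}$ (with $\dim F = k$) is adjoined, every proper subface of $F$ is already present, so the completed boundary $\partial F$ lies in the current subgraph and the only genuinely new vertex is the center $z_F$ produced when $F$ is completed.

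The arithmetic of a single step is the heart of the matter. For $k \ge 2$ the completed cell $\bar F$ is, by its construction, the pyramid extension of its completed boundary $\partial F$, which is a $(k-1)$-dimensional geometric sphere; the pyramid extension lemma proved above then gives $\chi(\bar F)=1$, while $\partial F$, being the $(k-1)$-dimensional geometric sphere bounding the $k$-cell, has $\chi(\partial F)=1-(-1)^k$ by the defining Euler characteristic property of geometric graphs. The degenerate small cases agree: a $0$-cell is a point with empty boundary and a $1$-cell is an edge $K_2$ with boundary $S^0$, so that $\chi(\bar F)=1$ and $\chi(\partial F)=1-(-1)^k$ hold for every $k\ge 0$. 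Granting additivity, this yields the increment
$$ \chi(H_{\mathrm{new}})-\chi(H_{\mathrm{old}}) = \chi(\bar F)-\chi(\partial F) = 1-\bigl(1-(-1)^k\bigr) = (-1)^k \; . $$
Summing over the chain and using $\chi(H_{-1})=\chi(\emptyset)=0$ telescopes to $\chi(G')=\sum_k (-1)^k f_k$, since precisely $f_k$ of the increments equal $(-1)^k$.

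The step demanding the most care, and the \emph{main obstacle}, is justifying the additive law $\chi(A\cup B)=\chi(A)+\chi(B)-\chi(A\cap B)$ in the present clique-complex setting: I must show that adjoining $\bar F$ satisfies $H_{\mathrm{old}}\cap \bar F = \partial F$ and that every complete subgraph $K_{m+1}$ of $H_{\mathrm{old}}\cup \bar F$ lies entirely in $H_{\mathrm{old}}$ or entirely in $\bar F$. This is exactly where the cone structure of $\bar F$ enters: the only new vertex $z_F$ is joined precisely to the vertices of $\partial F$, so any clique meeting $z_F$ is contained in $\bar F$ while any clique avoiding $z_F$ is contained in $H_{\mathrm{old}}$; hence $v_m(H_{\mathrm{old}}\cup\bar F)=v_m(H_{\mathrm{old}})+v_m(\bar F)-v_m(\partial F)$ for every $m$, which is the additive law after taking the alternating sum. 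Finally I would record that the even-dimensionality of $G'$ is not used by this telescoping argument; it enters only in the intended application, where it fixes the sign $(-1)^d=+1$ of the top cells. The same proof therefore in fact delivers the identity for completions of every dimension.
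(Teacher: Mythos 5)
Your telescoping argument is a more carefully organized version of what the paper's proof only gestures at: the paper likewise reduces everything to $\chi(\bar F)=1$ for each face and to the Euler characteristics of face boundaries, but leaves the inclusion--exclusion implicit, whereas you make it explicit by shelling $G'$ cell by cell. For faces that genuinely arise from completion steps --- a wheel $W_n$ coned over a $C_n$, a cone over a completed $2$-sphere, and so on --- your increment computation $\chi(\bar F)-\chi(\partial F)=1-(1-(-1)^k)=(-1)^k$ and the clique-additivity via the single cone point $z_F$ are correct, and your observation that even-dimensionality is never actually used is consistent with the paper's own hypercube example, where the formula is applied to a three-dimensional completion.

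The gap is the blanket assumption that every $k$-face of $G'$ with $k\ge 2$ is a pyramid with a \emph{new} apex over a geometric $(k-1)$-sphere already sitting inside $H_{\mathrm{old}}$. The paper explicitly admits complete graphs as faces (``Every complete graph $K_{d+1}$ is a $d$-dimensional face''), and such simplicial faces are not produced by any completion step: as a graph, $K_{k+1}$ has no proper boundary sphere ($C_3=K_3$ already contains its triangle and is not a one-dimensional geometric graph), and the face is entirely contained in the union of its proper subfaces, so adjoining it changes nothing and the true increment is $0$, not $(-1)^k$. Correspondingly, your additivity claim breaks one step earlier: when the last edge of a simplex face is adjoined, the cliques it completes lie in neither $H_{\mathrm{old}}$ nor in $\bar F=K_2$, so $v_m(H_{\mathrm{old}}\cup\bar F)\neq v_m(H_{\mathrm{old}})+v_m(\bar F)-v_m(\partial F)$ and the edge increment is not $-1$. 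For the octahedron (its own completion, with $f_2=8$ triangular faces) the two errors cancel and the total $6-12+8=2$ is still right, but your proof as written does not establish this cancellation --- that the $(-1)^j$ owed by each $j$-dimensional simplex face is exactly repaid during the edge-addition phase --- in general. You need a separate argument for simplicial faces, for instance that every clique created inside the $1$-skeleton of the face structure is itself a face of the polytop, before the telescoping closes.
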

\begin{proof}
The Euler characteristic $\chi(G')$ is a sum $\sum_k (-1)^k v_k$, where $v_k$ counts 
the complete subgraphs $K_{k+1}$ of $G'$. 
Every $k$-dimensional face has Euler characteristic $1$. Counting
the $K_{k+1}$ subgraphs of the completed graph $G'$ gives the same 
result as counting the number $f_k$ of $k$-dimensional faces 
with $(-1)^k f_k$. When writing the Euler characteristic $1$ of
each face as an alternating sum of cardinalities of complete subgraphs,
we double count each boundary of a face. But each of these boundaries are geometric $2d-1$ 
dimensional graphs with Euler characteristic $0$ if $G$ was even
dimensional. 
\end{proof}

{\bf Example:} the dodecahedron $G$ has the graph Euler characteristic $\sum_{k=0}^{\infty} (-1)^k v_k = 20-30=-10$ 
because there are no triangles. The graph $G$ is a "sphere with 12 holes". 
The polytop Euler characteristic is $32-90+60=2$ because the two dimensional completed graph 
$G'$ has $20+12=32$ vertices, $30+60=90$ edges and $5 \cdot 12=60$ triangles so that the 
polytop Euler characteristic is $\chi(G')=2$. This is the Euler characteristic which Descartes has
counted: $f_0=20,f_1=30,f_2=12$ and $\chi(G')=20-30+12=2$.
In this example, each face is a wheel graph $W_5$ with $6$ vertices, $10$ edges and $5$ triangle.
Each double counted boundary is the cyclic graph $C_5$. \\

This concludes the {\bf graph theoretical definitions} of dimension, geometric graph, completion, polytop,
face, Euler characteristic and polytop Euler characteristic.

\section{The graph $G_f$}

In this section we define for a general graph $G=(V,E)$ and any function $f:V \to R$ 
a new graph $G_f$. We will see that if $G$ is geometric of dimension $d$ then $G_f$ has a completion which 
is geometric of dimension $d-1$. The graph $G_f$ is a discrete analogue of 
the "hyper surface" $\{ f=0 \; \} = f^{-1}(0)$ in a Riemannian manifold $M$. The vertices of $G_f$ consist of edges of $G$, 
where $f$ changes sign. The edges of $G_f$ are triangles of $G$ in which $f$ takes different
values. We will see that while the new graph $G_f$ is not geometric, it can be completed to become geometric.  \\

Given a graph $G=(V,E)$ and a function $f:V \to R$ which is nowhere zero, we can
partition the vertex set $V$ into two sets $V^+_f =  \{ x \; | \; f(x)>0 \; \}$ and 
$V^-_f= \{ x \; | \; f(x)<0 \; \}$ of cardinality $s$ and $t$.

\begin{defn}
A subgraph $H$ of $G$ is called {\bf mixed}, if it contains both vertices from $V^+_f$ and $V^-_f$. 
\end{defn}

Two mixed edges in a triangle forces the triangle to be mixed and the third edge to be not mixed.
Every mixed triangle of $G$ therefore has exactly $2$ edges in $V^+_f$ or two edges in $V^-_f$. 

\begin{defn}
Given an simple graph $G=(V,E)$ and a nonzero function $f$ on $V$, a new 
{\bf hypersurface graph} $G_f$ is defined as follows:
\begin{itemize}
   \item The vertices of $G_f$ are the mixed edges of $G$. 
   \item The edges of $G_f$ are the mixed triangles in $G$. 
\end{itemize}
Each mixed triangle connects exactly two mixed edges. 
\end{defn}

{\bf Examples:} \\
{\bf 1)} If $G$ is the octahedron and the function is positive on two antipodal points
and negative everywhere else, then $G_f$ is the union of two cyclic graphs $C_4$. If $f$
is positive everywhere or negative everywhere, then $G_f$ is empty. If $f$ is positive only 
on one vertex, then $G_f$ is a cyclic graph $C_4$. We can also realize $C_6$. 
Because $G$ is a Hamiltonian graph, we can find a coloring so that $G_f = C_8$.  \\
{\bf 2)} If $G=K_3$ and $s=1$ or $s=2$ then $G_f=K_2 \times K_1$  which is $K_2$. \\
{\bf 3)} If $G=K_4$ and $s=1$ or $s=3$ then $G_f=K_3$. If $s=t=2$, then $G_f = K_2 \times K_2$. \\
{\bf 4)} If $G=K_5$ then we have a tetrahedron with $4=\B{4}{1}$ vertices if $s=1$ or $t=1$.
If $s=2$ or $s=3$ we have a prism with $6$ vertices.  \\
{\bf 5)} For $G=K_6$ and $s=2$ or $s=4$ we have $8$ vertices in $B_f$. It is a prismatic graph
connecting two tetrahedra. If $s=3$, we have $9$ vertices and $18$ edges. This is $K_3 \times K_3$. \\

The next lemma tells that any hypersurface graph $G_f$ of a complete graph $K_k$ is a product 
of two complete graphs. It is therefore either a complete graph or a polytop as defined in the last 
section. 

\begin{lemma}
For a complete graph $G=K_k=(V,E)$ and any nonzero function $f:V \to R$,
the graph $G_f$ is isomorphic to the graph product $K_s \times K_t$, 
where $s=|V^+_f|$ and $t=|V^-_f|$. The graph $G_f$ is a polytop in this case. 
\end{lemma}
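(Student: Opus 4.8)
The plan is to exhibit an explicit isomorphism from $G_f$ onto $K_s \times K_t$ and then to read off the polytop claim from Proposition~\ref{productlemma}. The starting point is that since $G=K_k$ is complete, every pair of vertices is an edge and every triple a triangle, so the combinatorics of $G_f$ is governed entirely by the sign pattern of $f$. A mixed edge of $K_k$ is a pair $\{a,b\}$ with exactly one endpoint in $V^+_f$ and one in $V^-_f$; writing $u$ for its positive endpoint and $v$ for its negative endpoint identifies the vertex set of $G_f$ with $V^+_f \times V^-_f$, which is precisely the vertex set of $K_s \times K_t$. This already matches the vertex count $s \cdot t$.

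Next I would match the edges, and the key step is to classify the mixed triangles of $K_k$. A mixed triple either has two positive vertices $p_1,p_2$ and one negative vertex $n$, or one positive vertex $p$ and two negative vertices $n_1,n_2$. In the first case the triangle has exactly two mixed edges, $\{p_1,n\}$ and $\{p_2,n\}$, sharing the negative vertex $n$; in the second case the two mixed edges are $\{p,n_1\}$ and $\{p,n_2\}$, sharing the positive vertex $p$ (the remaining edge joins two like-signed vertices and is not mixed). Conversely, if two distinct mixed edges share exactly one endpoint, that shared endpoint lies in $V^+_f$ or in $V^-_f$ and the three vertices involved span a unique mixed triangle. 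Hence two mixed edges are joined in $G_f$ exactly when they overlap in a single vertex, and the connecting triangle is then unique. Translating through the identification $\{u,v\}\mapsto(u,v)$, this says that $(u_1,v_1)$ and $(u_2,v_2)$ are adjacent in $G_f$ if and only if either $u_1=u_2$ with $v_1\neq v_2$, or $v_1=v_2$ with $u_1\neq u_2$. This is verbatim the adjacency rule defining $K_s\times K_t$, so the vertex bijection is a graph isomorphism.

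For the polytop assertion I would then invoke Proposition~\ref{productlemma} with $l=s-1$ and $m=t-1$. When $s,t\geq 2$ the graph $K_s\times K_t$ admits a unique geometric completion and is therefore a polytop. In the degenerate cases $s=1$ or $t=1$ the product collapses to the complete graph $K_{\max(s,t)}$, which is a face rather than a polytop; this is exactly the dichotomy anticipated in the text preceding the lemma.

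I do not expect a genuine obstacle, since the content is a verification rather than a construction. The one point requiring care is the sign bookkeeping in the triangle classification: confirming that every mixed triangle contributes \emph{exactly} two mixed edges sharing its minority-sign vertex, and that two mixed edges sharing no vertex (involving four distinct vertices) can never lie in a common triangle. Once these are checked, the two adjacency rules coincide term by term and the isomorphism is immediate.
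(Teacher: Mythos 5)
Your proof is correct, but it takes a different route from the paper's. The paper proves the lemma by induction on $k$: starting from $K_2$ with $s=t=1$, it adds one vertex $v$ (say with $f(v)>0$) at a time, observes that $G_f$ gains $t$ new vertices $(v,w_j)$ and new edges from the mixed triangles $(v,v_i,w_j)$, and checks that this accretion is exactly how $K_s\times K_t$ grows into $K_{s+1}\times K_t$. You instead give a direct, one-shot verification: identify the mixed edges with $V^+_f\times V^-_f$, classify the mixed triangles by their majority sign, and show that adjacency in $G_f$ is precisely ``share exactly one endpoint,'' which coincides with the product adjacency. Your version has the advantage of isolating the key combinatorial fact explicitly (two mixed edges lie in a common triangle of a complete graph iff they overlap in a single vertex, and that triangle is then unique, so no multi-edges arise) --- a fact that is only implicit in the paper's inductive step; it is also self-contained and avoids the bookkeeping of the induction. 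The paper's induction has the merit of paralleling the inductive proof of Proposition~\ref{productlemma} and the inductive spirit of the whole paper. Your handling of the polytop claim via Proposition~\ref{productlemma}, including the honest observation that the degenerate cases $s=1$ or $t=1$ yield a complete graph (a face rather than a polytop in the paper's terminology), is consistent with the remark preceding the lemma; the paper's own proof does not address that clause at all.
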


\begin{proof}
Use induction with respect to in $k$. 
The induction starts at $K_2$, where for $s=t=1$, the graph $G_f$ is $K_1$. 
Assume the claim is settled for $k$ and all $s+t =k$. Take now $G=K_{k+1}$ and $s,$ satisfying $s+t=k+1$.
To use the induction assumption, we assume $v$ is the additional vertex added to $K_k$ to get $K_{k+1}$.
We can assume $f(v)>0$ because the other case is similar. 
Let $v_1,\dots,v_{s+1}$ be the vertices in $V_+$ and let $w_1,\dots, w_t$ be in $V_-$. 
We want to show that the new graph $G_f$ is $K_{s+1} \times K_t$. The graph $G_{f,k}$ defined 
by $K_k$ is a subgraph of $G_{f,k+1}$ the graph defined by $K_{k+1}$. 
We additionally have got $t$ new vertices $(v,w_j)$ in $G_f$. The order of $G_f$ and $K_{s+1} \times K_t$
are the same. The new graph gets new edges $(v,v_i,w_j)$ which connect all the $s \cdot t$ old vertices $(v_i,w_j)$ 
with the new $t$ vertices $(v,w_j)$. There are no connections between the new vertices
in the same was as $K_s \times K_t$ is extended to $K_{s+1} \times K_t$. 
\end{proof}

\begin{figure}
\scalebox{0.25}{\includegraphics{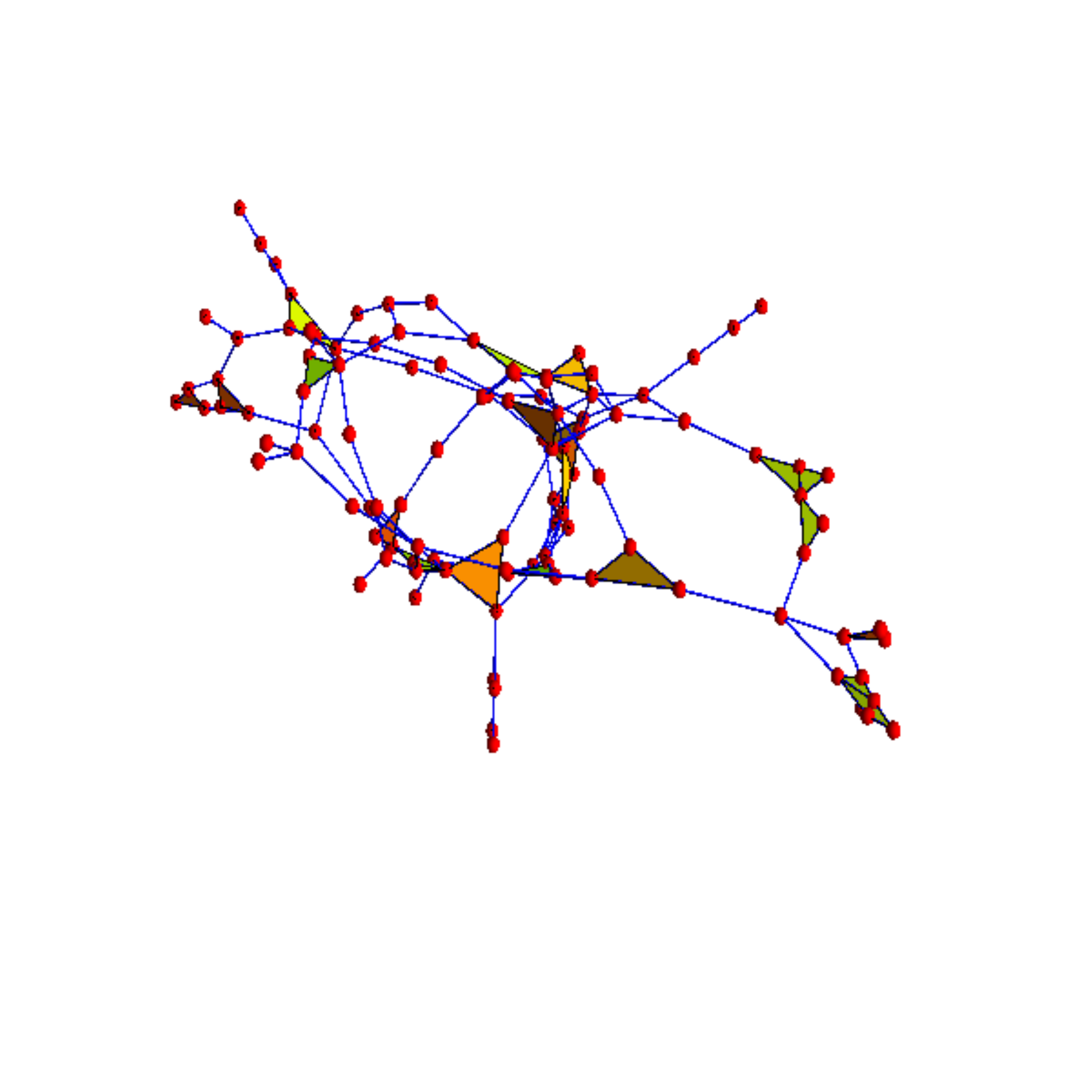}}
\scalebox{0.25}{\includegraphics{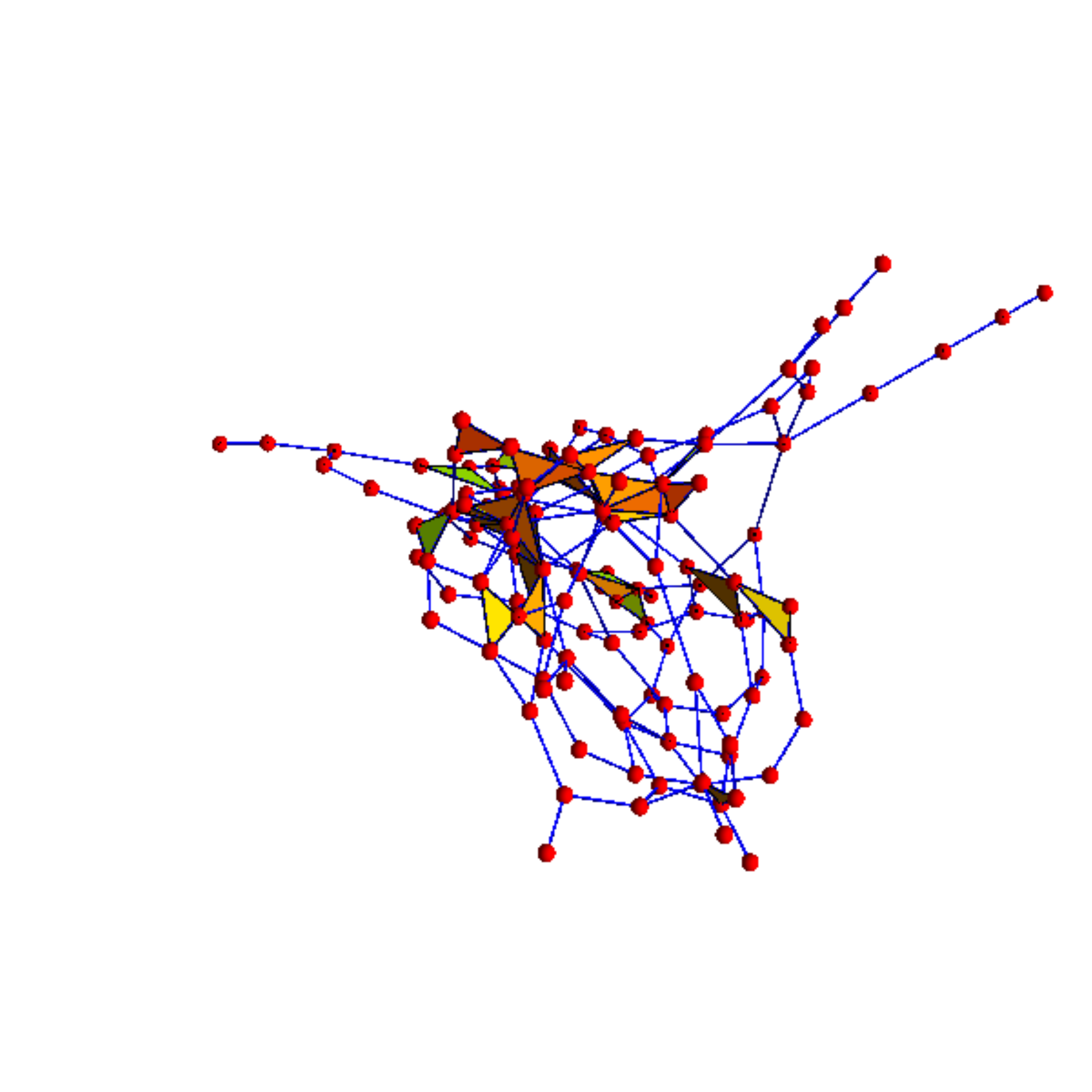}}
\caption{
The figure shows the graph $G_f$ for two random graphs $G$ in the Erd\"os-Renyi 
probability space $G(40,0.3)$. The function $f$ was also chosen randomly. 
The dimension of $G_f$ tends to be smaller than ${\rm dim}(G)-1$. 
For geometric graphs of dimension $d$, the dimension of the completion of $G_f$ is 
exactly $d-1$. 
}
\label{rwkgraph}
\end{figure}

We have now seen that for any $d$-dimensional geometric graph and any nonzero function $f$, 
the graph $G_f$ is a polytop which can be completed to become a $(d-1)$ dimensional geometric graph. 
For $d=1$, this can be seen easily because
a one dimensional geometric graph has no triangles and the graph $G_f$ has no edges and
the dimension of $G_f$ therefore is uniformly $0$. For a two-dimensional geometric graph, 
the graph $G_f$ is a union of closed cycles. Some notation: 

\begin{defn}
For unit spheres $G=S(x)$ and injective $f$ we use the name 
$A_f(x)$ for the graph $S(x)_g$ with $g(y) = f(y)-f(x)$. 
The injectivity of $f$ implies that $g$ is nonzero.
We denote by $B_f(x)$ the completion of $A_f(x)$. 
\end{defn}

\section{The index formula}

The main result in this paper is:

\begin{thm}[Index formula]
If $G=(V,E)$ is a simple graph and $f$ is an injective function on $V$ then
$$  j_f(x)= [2-\chi(S(x))- \chi(B_f(x))]/2 \; . $$
\label{maintheorem}
\end{thm}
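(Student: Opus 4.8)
The plan is to transcribe the continuum inclusion--exclusion argument given in the introduction into the combinatorial setting; the only genuine work is to replace the set-theoretic splitting of $S(x)$ along a level set by a clique count. Unwinding the definitions, $j_f(x)=\tfrac12[i_f(x)+i_{-f}(x)]$ with $i_f(x)=1-\chi(S_f^-(x))$ and $i_{-f}(x)=1-\chi(S_f^+(x))$, where $S_f^-(x)$ and $S_f^+(x)$ are the subgraphs of $S(x)$ induced by the vertices $y$ with $f(y)<f(x)$ and $f(y)>f(x)$ respectively. Hence $2j_f(x)=2-\chi(S_f^-(x))-\chi(S_f^+(x))$, and the theorem is equivalent to the single identity
\begin{equation}
\chi(B_f(x)) = \chi(S_f^-(x))+\chi(S_f^+(x))-\chi(S(x)). \label{core}
\end{equation}
In the continuum \eqref{core} is mere additivity of $\chi$ for $S(x)=\{f\le f(x)\}\cup\{f\ge f(x)\}$ glued along $B_f(x)=\{f=f(x)\}$; since $f$ is injective here, no vertex sits on the level set and a different bookkeeping is required.

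I would produce the right-hand side of \eqref{core} by counting cliques. Every complete subgraph $K_{k+1}$ of $S(x)$ lies entirely in $S_f^-(x)$, entirely in $S_f^+(x)$, or is \emph{mixed}; writing $W_k(x)$ for the number of mixed $K_{k+1}$ (so $W_0(x)=0$), this gives $v_k(S(x))=v_k(S_f^-(x))+v_k(S_f^+(x))+W_k(x)$ for every $k$. Taking the alternating sum over $k$ turns this into
$$\chi(S_f^-(x))+\chi(S_f^+(x))-\chi(S(x))=\sum_{k\ge1}(-1)^{k-1}W_k(x) \; .$$
Therefore \eqref{core}, and with it the theorem, reduces to proving
\begin{equation}
\chi(B_f(x))=\sum_{k\ge1}(-1)^{k-1}W_k(x)=W_1(x)-W_2(x)+W_3(x)-\cdots \; . \label{faces}
\end{equation}

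The identity \eqref{faces} is where the polytope machinery enters, and it is the step I expect to be the crux. The idea is to read its right-hand side as the alternating face count of $A_f(x)$. A mixed $K_{k+1}$ in $S(x)$ with $s$ vertices below and $t=k+1-s$ above $f(x)$ contributes, by the product description of $G_f$ for complete graphs, the piece $K_s\times K_t$ to $A_f(x)$, and by Proposition~\ref{productlemma} this is (after completion, together with one further pyramid step when $s,t\ge2$) a face of dimension $s+t-2=k-1$, independently of the split $(s,t)$. Thus the mixed simplices counted by $W_k(x)$ are exactly the $(k-1)$-dimensional faces of the polytope $A_f(x)$, i.e. $f_{k-1}=W_k(x)$. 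Since the completion $B_f(x)$ triangulates the polytopal complex $A_f(x)$ without changing the underlying space, its Euler characteristic equals the alternating face count $\sum_{j\ge0}(-1)^jf_j=\sum_{k\ge1}(-1)^{k-1}W_k(x)$, which is precisely \eqref{faces}.

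The main obstacle is the assertion that $A_f(x)$ really is a polytopal complex whose faces are these product cells $K_s\times K_t$, glued consistently along common sub-faces. This demands checking that the lower-dimensional faces of the cell attached to a mixed $K_{k+1}$ are exactly the cells attached to its mixed sub-simplices, so that the face poset of $A_f(x)$ matches the poset of mixed cliques of $S(x)$; the product description applied to subgraphs should yield this, but the bookkeeping must be handled carefully, in particular for the squares $K_2\times K_2$, whose filled $2$-cell only materializes after completion. Once the complex structure and the correspondence $f_{k-1}=W_k$ are in place, invariance of $\chi$ under the completion finishes the argument, and no parity hypothesis on $\dim B_f(x)$ is needed, since we are comparing two cell decompositions of one space rather than re-counting simplices as in Proposition~\ref{propo5}.
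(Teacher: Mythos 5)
Your argument is essentially the paper's own proof: you make the same reduction $2j_f(x)=2-\chi(S_f^-(x))-\chi(S_f^+(x))$, use the same clique decomposition $v_k(S(x))=v_k(S_f^-(x))+v_k(S_f^+(x))+W_k(x)$ to produce the alternating sum $W_1-W_2+\cdots$, and identify that sum with $\chi(B_f(x))$ by matching mixed $K_{k+1}$'s with $(k-1)$-dimensional product faces $K_s\times K_t$ of $A_f(x)$ and counting faces --- which is exactly the content of Lemma~\ref{lemmawk} together with Proposition~\ref{propo5}, here inlined rather than cited. The step you single out as the crux (that the face poset of $A_f(x)$ matches the poset of mixed cliques so that $f_{k-1}=W_k$) is indeed the same point on which the paper's Lemma~\ref{lemmawk} rests.
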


The proof of Theorem~\ref{maintheorem} is given below. First a lemma: 

\begin{lemma}[Counting $W_k$]
The number $W_k$ of mixed $k$-dimensional simplices $K_{k+1}$ in $G$ satisfies the formula 
$$ \chi(G_f') = W_1-W_2+W_3-\cdots-W_{2d} \; ,$$
where $G_f'$ is the completion of $G_f$. 
\label{lemmawk}
\end{lemma}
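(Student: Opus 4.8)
The plan is to compute $\chi(G_f')$ directly from the face structure of the completed polytope $G_f'$, matching faces with mixed simplices of $G$. The guiding identity is a trivial count: sorting each complete subgraph $K_{k+1}$ of $G$ by whether its vertices lie entirely in $V^+_f$, entirely in $V^-_f$, or split (mixed), one gets $v_k = v_k^+ + v_k^- + W_k$, so that, writing $G^\pm$ for the subgraphs induced on $V^\pm_f$ and using $W_0=0$, the alternating sum satisfies $W_1 - W_2 + W_3 - \cdots = \chi(G^+) + \chi(G^-) - \chi(G)$. Thus the lemma is the inclusion-exclusion statement that $\chi(G_f')$ measures exactly the overlap of the two sides $G^+$ and $G^-$, and everything reduces to showing that this alternating sum of mixed-simplex counts is the Euler characteristic of the hypersurface graph.

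To see this I would establish the face correspondence: for every $k\ge1$ the number of $(k-1)$-dimensional faces of $G_f'$ equals $W_k$. The local model comes from the lemma identifying $(K_{k+1})_f$ with $K_s\times K_t$, where $s=|V(\sigma)\cap V^+_f|$ and $t=|V(\sigma)\cap V^-_f|$ for the mixed simplex $\sigma=K_{k+1}$, so $s+t=k+1$. If $s=1$ or $t=1$ this product is the complete graph $K_k$, a $(k-1)$-dimensional face; if $s,t\ge2$, then by Proposition~\ref{productlemma} the product $K_s\times K_t$ completes, after one further pyramid construction, to a face of dimension $(s-1)+(t-1)=k-1$. In every case a single mixed $K_{k+1}$ contributes exactly one $(k-1)$-dimensional cell.

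Next I would glue the local models. The assignment sending a mixed simplex $\sigma$ to the completion of $\sigma_f$ respects inclusion: a mixed sub-simplex $\sigma'\subset\sigma$ yields a sub-polytope $\sigma'_f\subset\sigma_f$, so these cells form a poset isomorphic, up to the shift $k\mapsto k-1$, to the poset of mixed simplices of $G$. I would verify that this poset is precisely the face lattice of $G_f'$, i.e.\ that the completion steps building $G_f'$ from $G_f$ perform exactly the pyramid constructions dictated by the products $K_a\times K_b$, $a+b=k+1$, and introduce no further cells. This yields $f_{k-1}=W_k$ for all $k$, and then the face-counting of Proposition~\ref{propo5} gives
\[
\chi(G_f') = \sum_{j\ge0}(-1)^j f_j = \sum_{j\ge0}(-1)^j W_{j+1} = W_1 - W_2 + W_3 - \cdots ,
\]
the sum terminating at the largest $k$ for which $W_k\neq0$, in accordance with the finite range displayed in the statement.

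The main obstacle is the gluing step: proving that the locally defined cells assemble into the genuine face lattice of the unique completion $G_f'$, with no over- or under-counting where several mixed simplices overlap. The product lemma controls each simplex in isolation, but one must check that a lower-dimensional product $K_a\times K_b$ always occurs as a boundary face of the higher products containing it rather than as a separate maximal cell, and that the completion steps for distinct mixed simplices are mutually compatible. A secondary point is the even-dimensional hypothesis of Proposition~\ref{propo5}: when $G_f'$ is odd-dimensional one must re-run the double-counting argument there, checking that the shared boundaries of top cells still have Euler characteristic $0$, so that the alternating face count remains valid.
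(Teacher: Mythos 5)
Your proposal follows essentially the same route as the paper: identify each mixed $k$-dimensional simplex of $G$ with a single $(k-1)$-dimensional face of the completion $G_f'$ via the local product structure $K_s\times K_t$ from the hypersurface-of-a-complete-graph lemma and Proposition~\ref{productlemma}, and then convert the face count $f_{k-1}=W_k$ into $\chi(G_f')$ using Proposition~\ref{propo5}. You are in fact more explicit than the paper's one-paragraph proof about the two delicate points (that the local cells glue into the actual face lattice of $G_f'$, and the parity hypothesis needed to apply Proposition~\ref{propo5}), which the paper simply asserts by saying the faces ``fit together.''
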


\begin{proof}
We know that each face is of the form $K_s \times K_t$ with $s+t=k$ which can be completed.
The faces of an odd dimensional geometric polytop $G_f$ fit together and form a completed 
polytop $G_f'$. Therefore, the sum $W_1-W_2+W_3-\cdots + (-1)^{2d-1} W_{2d}$ 
the polytop Euler characteristic of $G_f$ which is $\chi(G_f')$. 
See Proposition~\ref{propo5}.
\end{proof}

\begin{coro}
If $G=(V,E)$ is a $d$ dimensional geometric graph, then
$$  \chi(G)=\sum_x (1+(-1)^d)/2 - \chi(B_f(x))/2 \; . $$
Especially, 
$$  \chi(G)= -\sum_{x \in V} \chi(B_f(x))/2 $$ 
for odd dimensional graphs $G$ and
$$  \chi(G)=\sum_{x \in V} 1-\chi(B_f(x))/2 $$ 
for even-dimensional graphs $G$
\label{maincorollary}
\end{coro}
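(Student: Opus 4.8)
The plan is to combine the Index Formula (Theorem~\ref{maintheorem}) with the Poincar\'e--Hopf relation $\sum_{x \in V} j_f(x) = \chi(G)$ recorded in the introduction, using as the only extra input the defining property of a geometric graph. There is no genuine obstacle in the corollary itself: all of the analytic content already resides in Theorem~\ref{maintheorem}, and what remains is substitution followed by a global summation. So the main ``difficulty'' is purely bookkeeping, namely keeping track of the parity factor $(-1)^d$.

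First I would fix an arbitrary injective $f$ on $V$ and apply Theorem~\ref{maintheorem} at each vertex $x$, giving $j_f(x) = [2 - \chi(S(x)) - \chi(B_f(x))]/2$. Since $G$ is geometric of dimension $d$, every unit sphere satisfies $\chi(S(x)) = 1-(-1)^d$ by definition, and substituting this value yields the pointwise identity
\[
 j_f(x) = \tfrac{1}{2}\bigl[\,1 + (-1)^d - \chi(B_f(x))\,\bigr],
\]
which is exactly formula~(\ref{mainformula}) of the introduction. This is the single place where the geometric hypothesis on $G$ is used; for a general simple graph one cannot eliminate the $\chi(S(x))$ term, which is why the corollary is stated only for geometric $G$.

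Next I would sum this identity over all $x \in V$ and invoke Poincar\'e--Hopf to replace $\sum_x j_f(x)$ by $\chi(G)$, obtaining the first displayed formula $\chi(G) = \sum_x (1+(-1)^d)/2 - \chi(B_f(x))/2$. The two special cases then follow by evaluating the constant $(1+(-1)^d)/2$: it is $0$ when $d$ is odd, producing $\chi(G) = -\sum_{x} \chi(B_f(x))/2$, and it is $1$ when $d$ is even, producing $\chi(G) = \sum_{x} 1 - \chi(B_f(x))/2$. As a consistency check I would note that the left-hand side is manifestly independent of $f$, so although each individual term $\chi(B_f(x))$ varies with the choice of injective $f$, the total on the right must not; this is precisely the phenomenon underlying the index-expectation viewpoint of the introduction, where averaging $j_f(x)$ over $f$ recovers the curvature $K(x)$.
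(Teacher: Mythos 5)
Your proposal is correct and follows essentially the same route as the paper, which proves the corollary in one line by combining Theorem~\ref{maintheorem} with the geometric-graph assumption $\chi(S(x))=1-(-1)^d$ (the summation via Poincar\'e--Hopf $\sum_x j_f(x)=\chi(G)$ being implicit there and spelled out by you). Your added remark that the right-hand side must be independent of $f$ is a sensible consistency check but not part of the paper's argument.
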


Here is the proof of Theorem~\ref{maintheorem}:

\begin{proof}
Adding
$$  i_f^-(x) = (1-\chi(S^-(x))) \;   $$
and
$$  i_f^+(x) = (1-\chi(S^+(x))) \;   $$
gives
$$ 2j_f(x) = [2 - \chi(S^-(x)) - \chi(S^+(x))]  = 2-\chi(S(x)) - \sum_{k=1}^{\infty} (-1)^k W_k(x)  \; . $$
By Lemma~\ref{lemmawk} applied to the sphere $S(x)$,
we see that $\sum_{k=1}^{\infty} (-1)^k W_k(x)$ is the polytop Euler characteristic of $S(x)_f = A_f(x)$ which
is equal to $\chi(B_f(x))$. 
\end{proof}

The proof of Corollary~\ref{maincorollary} follows immediately:

\begin{proof}
Use Theorem~\ref{maintheorem} and 
the assumption $\chi(S(x)) = 1-(-1)^d$ if $G$ is $d$ dimensional.
\end{proof}

Now the proof of the Theorem~\ref{zeroindex}:

\begin{proof}
We use induction with respect to $d$. 
Assume we know that the symmetric index $j_f(x)$ is zero everywhere for all $(d-2)$-dimensional graphs. 
Then the Euler characteristic of any $d-2$ dimensional geometric graph is zero by Poincar\'e-Hopf
$$ \sum_{x \in V} j_f(x) = \chi(G) \; . $$
Because the dimension $d$ is odd, we have $\chi(S(x))=2$ so that $1-\chi(S(x))/2=0$.
The sum simplifies therefore to
$$ 2j_f(x)= - \sum_{k=1}^{\infty} (-1)^{k} W_k(x)  \;  $$
which is of course a finite sum. 
By Lemma~\ref{lemmawk}, this is the polytop Euler characteristic of a $(d-2)$-dimensional polytop 
and so zero by induction.
\end{proof}

{\bf Examples:}  \\
{\bf 1)} For the 3D cross polytop (the pyramid construction of an octahedron), 
we have $v_0=8,v_1=24,v_2=32,v_3=16$ with $\chi=v_0-v_1+v_2-v_3=0$. 
and $V_0=6, V_1=12, V_2=8$ at every vertex $x$. The curvature by definition is
$$  K(x) = V_{-1}/1 - V_0/2 + V_1/3 - V_2/4 = 1-6/2+12/3-8/4 = 1-3+4-2=0  \; . $$
{\bf 2)} For a 5D cross polytop $G$, with $v_0=12$ vertices, $v_1=60$ edges, 
$v_2=160$ triangles and $v_3=240$ tetrahedra and $v_4=192$ spaces and 
$v_5=64$ halls. The Euler characteristic is $12-60+160-240+192-64=0$. We can compute
$W_k$ data for a typical function $f$
and check that $\sum_{v \in V} \sum_{k=1}^{5} (-1)^{k+1} W_k(v)=0$. 

\section*{Appendix A}

Here are the proofs of the three preprints \cite{cherngaussbonnet,poincarehopf,indexexpectation}.
The index expectation result appears here slightly generalized in that the 
probability measure on functions is allowed to have a general continuous distribution.
Let $v_k$ the number of complete $K_{k+1}$ subgraphs in $G$. Denote by $V_k(x)$ the number of 
$K_{k+1}$ subgraphs in the sphere $S(x)$ with the convention $V_{-1}(x)=1$.

\begin{lemma}[Transfer equations]
\label{transferequations}
$\sum_{x \in V} V_{k-1}(x) = (k+1) v_k$.
\end{lemma}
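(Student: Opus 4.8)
The plan is to prove the identity by a double count of incidences between cliques of $G$ and their distinguished vertices. Recall that $V_{k-1}(x)$ counts the complete subgraphs $K_k$ inside the unit sphere $S(x)$, that is, the sets of $k$ mutually adjacent vertices each of which is also adjacent to $x$. The key observation is that such a $K_k$ together with the vertex $x$ forms a $K_{k+1}$ in $G$: the $k$ sphere-vertices are mutually adjacent, and each is joined to $x$ by the definition of $S(x)$.

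First I would make this correspondence precise as a bijection. Let $P_k$ denote the set of pairs $(x,K)$ where $x \in V$ and $K$ is a $K_k$ subgraph of $S(x)$. The left-hand side $\sum_{x \in V} V_{k-1}(x)$ is exactly $|P_k|$. I would then define a map sending $(x,K)$ to the pair consisting of the complete subgraph $\tilde K = K \cup \{x\}$, which is a $K_{k+1}$ of $G$, together with the distinguished vertex $x \in \tilde K$. Conversely, from any $K_{k+1}$ subgraph $\tilde K$ of $G$ and any chosen vertex $x \in \tilde K$, deleting $x$ yields a $K_k$ lying in $S(x)$, since the remaining $k$ vertices are mutually adjacent and each is adjacent to $x$. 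These two operations are inverse to one another, so $P_k$ is in bijection with the set of pairs $(\tilde K, x)$ consisting of a $K_{k+1}$ of $G$ and a choice of one of its $k+1$ vertices.

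Counting this latter set the other way gives the right-hand side: each of the $v_k$ complete subgraphs $K_{k+1}$ of $G$ contributes exactly $k+1$ such pairs, one per vertex, so the total is $(k+1) v_k$. Combining the two counts yields $\sum_{x \in V} V_{k-1}(x) = (k+1) v_k$.

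The only point requiring care is the boundary case $k=0$, which is precisely why the convention $V_{-1}(x)=1$ is imposed. Here a ``$K_0$'' is the empty clique, the pair $(x,K)$ is just $x$ itself, and the correspondence identifies it with the single-vertex clique $K_1 = \{x\}$ carrying its unique distinguished vertex; the identity then reads $\sum_{x \in V} 1 = v_0$, the vertex count, which is consistent. No genuine obstacle arises beyond bookkeeping: the entire content lies in recognizing that ``a clique in a sphere together with its center'' and ``a clique of $G$ with a marked vertex'' are two descriptions of the same combinatorial object.
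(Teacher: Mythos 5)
Your proposal is correct and is essentially the paper's own argument: the paper's ``handshake'' double count between vertices and centers of $k$-simplices is exactly your bijection between pairs $(x,K)$ with $K$ a $K_k$ in $S(x)$ and pairs consisting of a $K_{k+1}$ of $G$ with a marked vertex. You merely make the correspondence explicit and check the $k=0$ convention, which the paper leaves implicit.
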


\begin{proof}
This generalizes Euler's hand shaking lemma $\sum_{x \in V} V_0(x)=2 v_1$:
Draw and count handshakes from every vertex to every center of any $k$-simplex in two 
different ways. A first count sums up all connections leading to
a given vertex, summing then over all vertices leading to $\sum_{x \in V} V_{k-1}(x)$.
A second count is obtained from the fact that every simplex has $k+1$ hands reaching out 
and then sum over the simplices gives $(k+1) v_k$ handshakes.
\end{proof}

Define curvature at a vertex $x$ as $K(x) = \sum_{k=0}^{\infty} (-1)^k \frac{V_{k-1}(x)}{k+1}$.

\begin{thm}[Gauss-Bonnet]
$\sum_{x \in V} K(x) = \chi(G)$.
\label{theorem1}
\end{thm}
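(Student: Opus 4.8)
The plan is to reduce the theorem directly to the Transfer equations of Lemma~\ref{transferequations}, which is the only nontrivial input needed. The strategy is to sum the curvature over all vertices, interchange the order of the two summations, and watch the factor $k+1$ cancel against the denominator built into the curvature.

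Concretely, I would start from the definition $K(x) = \sum_{k=0}^{\infty} (-1)^k \frac{V_{k-1}(x)}{k+1}$ and write
$$ \sum_{x \in V} K(x) = \sum_{x \in V} \sum_{k=0}^{\infty} (-1)^k \frac{V_{k-1}(x)}{k+1} = \sum_{k=0}^{\infty} \frac{(-1)^k}{k+1} \sum_{x \in V} V_{k-1}(x) \; . $$
The interchange of the two sums is legitimate because $G$ is a finite graph, so only finitely many $k$ contribute a nonzero term and each inner sum is finite; there is no convergence issue to address. I would then substitute the Transfer equation $\sum_{x \in V} V_{k-1}(x) = (k+1) v_k$ from Lemma~\ref{transferequations}, which collapses the expression to
$$ \sum_{k=0}^{\infty} \frac{(-1)^k}{k+1} (k+1) v_k = \sum_{k=0}^{\infty} (-1)^k v_k = \chi(G) \; . $$

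One point I would verify explicitly is that the bookkeeping at $k=0$ is consistent with the convention $V_{-1}(x)=1$: the inner sum becomes $\sum_{x \in V} V_{-1}(x) = |V| = v_0$, which matches the Transfer equation at $k=0$ reading $\sum_{x \in V} V_{-1}(x) = 1 \cdot v_0$, so the vertex count enters the Euler characteristic correctly. This is really the only place where the somewhat artificial convention $V_{-1}(x)=1$ has to be reconciled with the combinatorial statement.

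I do not expect a genuine obstacle here: once the Transfer equations are in hand, Gauss-Bonnet is a one-line consequence, and the entire content of the theorem is packaged into the cancellation of $k+1$. The only care required is the (trivial, since everything is finite) justification of swapping the summation order and the index-convention check described above.
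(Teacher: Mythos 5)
Your proposal is correct and is essentially identical to the paper's own proof: both sum the curvature over all vertices, interchange the two finite sums, and cancel the factor $k+1$ against the Transfer equation $\sum_{x \in V} V_{k-1}(x) = (k+1) v_k$ to arrive at $\sum_k (-1)^k v_k = \chi(G)$. Your explicit check of the $k=0$ term against the convention $V_{-1}(x)=1$ is a sensible addition but does not change the argument.
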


\begin{proof}
From the definition $\sum_{x \in V} K(x) = \sum_{x \in V} \sum_{k=0}^{\infty} (-1)^k \frac{V_{k-1}(x)}{k+1}$
we get by (\ref{transferequations})
$$  \sum_{x \in V} K(x) = \sum_{k=0}^{\infty} \sum_{x \in V} (-1)^k \frac{V_{k-1}(x)}{k+1}  
                        = \sum_{k=0}^{\infty} (-1)^k v_k = \chi(G) \;. $$
\end{proof}

Given $f$, let $W_k(x)$ denote the number of all mixed $k$ simplices in the sphere $S(x)$, 
simplices for which $f(y)$ takes both values smaller and larger than $f(x)$. 

\begin{lemma}[Intermediate equations]
$\sum_{x \in V} W_k(x) = k v_{k+1}$
\label{intermediateequations}
\end{lemma}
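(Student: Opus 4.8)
The plan is to prove this by a weighted handshaking argument in the same spirit as the Transfer equations (Lemma~\ref{transferequations}), the only new ingredient being the sign condition that makes a simplex ``mixed''. First I would unfold the left-hand side as a count of incidence pairs $(x,\sigma)$, where $x \in V$ and $\sigma$ is a mixed $k$-dimensional simplex $K_{k+1}$ sitting inside the sphere $S(x)$. Since every $K_{k+1}$ in $S(x)$ consists of $k+1$ mutual neighbours of $x$, the set $T = \{x\} \cup \sigma$ is a complete graph $K_{k+2}$ in $G$, and $x$ is a distinguished vertex of it. Conversely, any $K_{k+2}$ subgraph $T$ of $G$ together with a chosen vertex $x \in T$ produces such a pair with $\sigma = T \setminus \{x\}$. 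Thus the incidence pairs are in bijection with pairs $(T,x)$, where $T$ ranges over the $v_{k+1}$ copies of $K_{k+2}$ in $G$ and $x \in T$, subject to the condition that $\sigma = T \setminus \{x\}$ be mixed relative to $f(x)$.

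The key step is then to count, for a fixed $K_{k+2}$ subgraph $T$, how many of its $k+2$ vertices $x$ make the complementary simplex $T \setminus \{x\}$ mixed. Here injectivity of $f$ enters decisively: restricted to the $k+2$ vertices of $T$, the function $f$ takes $k+2$ distinct values, so $T$ has a unique $f$-maximal vertex and a unique $f$-minimal vertex. By definition, $T \setminus \{x\}$ is mixed precisely when it contains a vertex with $f$-value below $f(x)$ and a vertex with $f$-value above $f(x)$, i.e.\ precisely when $x$ is neither the $f$-maximum nor the $f$-minimum of $T$; and every such remaining choice of $x$ does yield a mixed complementary simplex. Hence exactly $(k+2)-2 = k$ vertices of each $T$ contribute.

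Summing this local count over all $K_{k+2}$ subgraphs of $G$ then gives
$$ \sum_{x \in V} W_k(x) = \sum_{T \cong K_{k+2}, \, T \subseteq G} k = k \, v_{k+1} \; , $$
which is the asserted identity. I do not expect a genuine obstacle: the argument is a direct refinement of the Transfer equations, where instead of counting all $k+2$ centres of each $K_{k+2}$ one discards the two extremal ones. The only point requiring care is the characterisation of ``mixed'' in terms of the extremal vertices of $f$ on $T$, together with the observation that injectivity guarantees exactly one maximum and one minimum, so that the contribution is uniformly $k$ and does not depend on the particular $T$ or on $f$.
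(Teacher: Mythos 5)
Your argument is correct and is essentially the paper's own proof: both count, for each of the $v_{k+1}$ copies of $K_{k+2}$ in $G$, the vertices $x$ whose complementary $K_{k+1}$ in $S(x)$ is mixed, and observe that exactly the $k$ non-extremal vertices of $f$ on that simplex qualify. Your write-up merely makes explicit the incidence bijection and the role of injectivity, which the paper leaves implicit.
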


\begin{proof}
For each of the $v_{k+1}$ simplices $K_{k+2}$ in $G$, 
there are $k$ vertices $x$ which have neighbors in $K_{k+2}$ with both larger and 
smaller values. For each of these $k$ vertices $x$, we can look
at the unit sphere $S(x)$ of $v$. The simplex $K_{k+2}$ defines a 
$k$-dimensional simplex $K_{k+1}$ in that unit sphere. Each of them 
adds to the sum $\sum_{x \in V} W_k(x)$ which consequently is equal to $k v_{k+1}$.
\end{proof}

Define $i_f(x) = 1-\chi(S^-_f(x))$, where $S^-_f(x) = \{ y \in S(x)  \; | \; f(y)<f(x) \; \}$. 

\begin{lemma}[Index stability]
The index sum $\sum_{x \in V} i_f(x)$ is independent of $f$. 
\label{indexstability}
\end{lemma}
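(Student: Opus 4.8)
The plan is to prove the stronger assertion that $\sum_{x \in V} i_f(x) = \chi(G)$; since the right-hand side does not involve $f$, index stability is an immediate corollary. This is the discrete Poincar\'e-Hopf theorem, and the computation parallels the proof of Gauss-Bonnet (Theorem~\ref{theorem1}): rather than argue abstractly that the sum cannot change as $f$ varies, I would simply evaluate it by a single double-counting argument and read off that the value is a topological invariant.

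First I would expand the sum using the definition $i_f(x) = 1 - \chi(S^-_f(x))$ together with $\chi(S^-_f(x)) = \sum_{k=0}^{\infty} (-1)^k V_k^-(x)$, where I write $V_k^-(x)$ for the number of complete subgraphs $K_{k+1}$ contained in $S^-_f(x)$. This gives $\sum_{x \in V} i_f(x) = v_0 - \sum_{k=0}^{\infty} (-1)^k \sum_{x \in V} V_k^-(x)$, so the entire problem reduces to computing the inner sums $\sum_{x \in V} V_k^-(x)$.

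The key step is a handshaking-style count, in the spirit of Lemma~\ref{transferequations} and Lemma~\ref{intermediateequations}. A term of $V_k^-(x)$ is a $K_{k+1}$ inside $S^-_f(x)$, that is, a set of $k+1$ neighbors of $x$ forming a simplex, all with $f$-value below $f(x)$; adjoining $x$ turns it into a $K_{k+2}$ simplex in $G$ whose unique maximum vertex is $x$. Since $f$ is injective, every $K_{k+2}$ in $G$ has exactly one maximum, so each of the $v_{k+1}$ simplices $K_{k+2}$ is counted exactly once as we range over $x$. Hence $\sum_{x \in V} V_k^-(x) = v_{k+1}$. Substituting and reindexing $j = k+1$ collapses the sum to $v_0 - \sum_{k \ge 0}(-1)^k v_{k+1} = \sum_{j \ge 0} (-1)^j v_j = \chi(G)$.

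The only real content is this double count, and its correctness rests entirely on the injectivity hypothesis, which guarantees a well-defined strict maximum in every simplex; this is the step I would verify most carefully, making sure the indexing of $V_k^-$ against $v_{k+1}$ is off-by-one correct. If one instead wishes to prove stability without evaluating the sum, an alternative is to regard $i_f$ as depending only on the total order induced by $f$ and to show invariance under an adjacent transposition of two consecutive values: swapping $a$ and $b$ alters only $i_f(a)+i_f(b)$, and only when $a,b$ are adjacent in $G$, and a short link computation shows that both affected terms shift by $1-\chi(C^-)$, where $C^-$ is the set of common neighbors lying below both, so the net change cancels. I would present the direct count as the main proof, since it simultaneously pins down the value $\chi(G)$ and thereby delivers Poincar\'e-Hopf.
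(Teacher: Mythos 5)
Your argument is correct, but it takes a genuinely different route from the paper. The paper proves Lemma~\ref{indexstability} by a local deformation: it changes $f$ so that exactly one pair of adjacent vertices $x,y$ swaps relative order, and argues that the resulting changes in $\chi(S^-_f(x))$ and $\chi(S^-_f(y))$ cancel (both links gain or lose the simplices containing the common neighbors in question), so the sum is invariant under generating transpositions of the induced order; the actual value $\chi(G)$ is only extracted afterwards, in the Poincar\'e--Hopf theorem, by symmetrizing over $f\leftrightarrow -f$ and invoking the transfer and intermediate equations (Lemmas~\ref{transferequations} and~\ref{intermediateequations}). You instead evaluate the sum outright: since $f$ is injective, every $K_{k+2}$ of $G$ has a unique maximum vertex $x$, and its remaining $k+1$ vertices form a $K_{k+1}$ in $S^-_f(x)$, giving the bijective count $\sum_{x\in V} V_k^-(x)=v_{k+1}$ and hence $\sum_x i_f(x)=v_0-\sum_{k\ge 0}(-1)^k v_{k+1}=\chi(G)$; the off-by-one bookkeeping you flag is indeed the only delicate point and it checks out. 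Your version is shorter, avoids the symmetrization step entirely, and makes the subsequent Poincar\'e--Hopf theorem an immediate corollary rather than a separate computation; what the paper's deformation argument buys in exchange is a purely local statement (invariance under a single order swap) that does not require knowing the value of the sum, which can be useful when one wants stability for index-like quantities whose total is not a priori identified with $\chi(G)$. Your sketched alternative via adjacent transpositions is essentially the paper's own proof, so either path lands you on known ground.
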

\begin{proof}
Deform $f$ at a vertex $x$ so that only for $y \in S(x)$, the value $f(y)-f(x)$ changes from positive to negative.
Now $S^-(x)$ has gained a point $y$ and $S^-(y)$ has lost a point.
To show that $\chi(S^-(x)) + \chi(S^-(y))$ stays constant, show that $V_k^+(x)+ V_k^-(t)$ stays constant,
Since $i(x) = 1- \sum_k (-1)^k V_k^-(x)$, the lemma is proven if $V_k^-(x) + V_k^-(x)$ stays constant. 
Let $U_k(x)$ denote the number of $K_{k+1}$ subgraphs of
$S(x)$ which contain $y$. Similarly, let $U_k(y)$ the number of $K_{k+1}$ subgraphs of
$S(y)$ which do not contain $x$ but are subgraphs of $S^-(y)$ with $x$.
The sum of $K_{k+1}$ graphs of $S^-(x)$ changes by $U_k(y)-U_k(x)$.
Summing this over all vertex pairs $x,y$ gives zero.
\end{proof}

\begin{thm}[Poincar\'e-Hopf]
$\sum_{x \in V} i_f(x) = \chi(G)$.
\end{thm}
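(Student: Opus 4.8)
The plan is to evaluate the sum $\sum_{x \in V} i_f(x)$ directly for an arbitrary fixed injective $f$, exploiting that injectivity forces every simplex of $G$ to have a unique vertex of maximal $f$-value. Writing $i_f(x) = 1 - \chi(S^-_f(x))$ and expanding $\chi(S^-_f(x)) = \sum_{k=0}^{\infty} (-1)^k V_k^-(x)$, where $V_k^-(x)$ is the number of $K_{k+1}$ subgraphs of $S(x)$ lying entirely in $S^-_f(x)$, the one identity I need is
$$ \sum_{x \in V} V_k^-(x) = v_{k+1} \; . $$
This is the crux: a $K_{k+1}$ counted by $V_k^-(x)$ together with $x$ forms a $K_{k+2}$ of $G$ on which $x$ is the maximal vertex, and conversely every $K_{k+2}$ contributes its $k+1$ non-maximal vertices to exactly one such count, namely at its unique maximal vertex. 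Hence each of the $v_{k+1}$ copies of $K_{k+2}$ in $G$ is counted exactly once.

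Granting this, the computation telescopes cleanly. Summing over $x$ and interchanging the two finite summations,
$$ \sum_{x \in V} i_f(x) = v_0 - \sum_{k=0}^{\infty} (-1)^k \sum_{x \in V} V_k^-(x) = v_0 - \sum_{k=0}^{\infty} (-1)^k v_{k+1} = \sum_{k=0}^{\infty} (-1)^k v_k = \chi(G) \; , $$
where the last step merely reabsorbs the leading term $v_0$ into the shifted alternating series. As a sanity check and an alternative route, one can instead invoke the just-proved Lemma on index stability: since $\sum_x i_f(x)$ is independent of $f$, it equals its expectation over independent continuous values $f(x)$; there $\E[V_k^-(x)] = V_k(x)/(k+2)$ because $x$ is the maximum of the relevant $k+2$ vertices with probability $1/(k+2)$, giving $\E[i_f(x)] = \sum_{k=0}^{\infty} (-1)^k V_{k-1}(x)/(k+1) = K(x)$ after reindexing with the convention $V_{-1}(x)=1$, whence Gauss-Bonnet yields $\sum_x i_f(x) = \sum_x K(x) = \chi(G)$.

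The only real obstacle is bookkeeping with the index shift: one must verify that the bijection pairs a $K_{k+2}$ of $G$ with a $K_{k+1}$ in the link of its maximal vertex, so that the counting function $V_k^-$ at level $k$ sums to the $(k+1)$-th face number $v_{k+1}$, and then that the resulting shifted alternating sum recombines into $\chi(G)$. Injectivity of $f$ is exactly what makes the maximal vertex of each simplex well defined, and it is the hypothesis that drives the whole argument; nothing deeper than careful index accounting is required.
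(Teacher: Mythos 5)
Your proof is correct, and it takes a genuinely different and more direct route than the paper. The paper never evaluates $\sum_{x} V_k^-(x)$ by itself: it first proves an index stability lemma (that $\sum_x i_f(x)$ is independent of $f$), then symmetrizes over $f \leftrightarrow -f$ so that only the combination $V_k^-(x)+V_k^+(x) = V_k(x) - W_k(x)$ appears, and finally evaluates that combination with two separate double-counting lemmas, the transfer equations $\sum_x V_{k-1}(x)=(k+1)v_k$ and the intermediate equations $\sum_x W_k(x)=k\,v_{k+1}$, giving $\sum_x (V_k^-(x)+V_k^+(x)) = (k+2)v_{k+1}-kv_{k+1}=2v_{k+1}$. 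Your single identity $\sum_x V_k^-(x)=v_{k+1}$, proved by pairing each $K_{k+2}$ of $G$ with the $K_{k+1}$ it leaves in the unit sphere of its unique $f$-maximal vertex, delivers exactly half of that total directly, and it makes the stability lemma, the symmetrization, and the mixed counts $W_k$ unnecessary for this theorem; injectivity of $f$ enters precisely where it must, to make the maximal vertex of each simplex well defined. What the paper's route buys is the stability lemma itself and the explicit role of the $W_k$, both of which are reused in the index formula that is the main subject of the paper; what your route buys is brevity and the absence of any deformation or averaging argument. Your fallback via index stability, clique stability and Gauss--Bonnet is also sound and not circular in the paper's logical ordering (both ingredients are established independently), but it is strictly heavier than your main argument. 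The index bookkeeping you worried about checks out: $v_0 - \sum_{k\ge 0} (-1)^k v_{k+1} = v_0 + \sum_{j\ge 1}(-1)^j v_j = \chi(G)$.
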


\begin{proof}
The number $V_k^-(x)$ of $k$-simplices $K_{k+1}$ in $S^-(x)$
and the number $V_k^+(x)$ of $k$-simplices in $S^+(x)$ are complemented 
within $S(x)$ by the number $W_k(x)$ of mixed $k$-simplices.
By definition, $V_k(x) = W_k(x) + V_k^+(x) + V_k^-(x)$.
By Lemma~\ref{indexstability}, the index $i_f(x)$ is the same for all 
injective functions $f:V \to \mathbf{R}$. Let $\chi'(G) = \sum_{x \in V} i_f(x)$. 
By the symmetry $f \leftrightarrow -f$ switching $S^+ \leftrightarrow S^-$, we 
can prove $2 v_0 - \sum_{x \in V} \chi(S^+(x)) + \chi(S^-(x)) = 2 \chi'(G)$ instead.
Lemma~\ref{transferequations} and Lemma~\ref{intermediateequations} give
\begin{eqnarray*}
 \chi'(G) &=& v_0+\sum_{k=0}^{\infty}(-1)^k \sum_{x \in V} \frac{V_k^-(x) + V_k^+(x)}{2}
           =  v_0+\sum_{k=0}^{\infty}(-1)^k \sum_{x \in V} \frac{V_k(x)   - W_k(x)}{2} \\
          &=& v_0+\sum_{k=0}^{\infty}(-1)^k \frac{(k+2) v_{k+1} - k v_{k+1}}{2}
           =  v_0+\sum_{k=1}^{\infty}(-1)^k v_k = \chi(G) \; .
\end{eqnarray*}
\end{proof}

Define a probability space $\Omega$ of all functions from $V$ to ${\bf R}$, where the 
probability measure is the product measure and where $f(x)$ has a continuous distribution. 
The later assures that injective functions have probability $1$. 
Denote by $\E[X]$ the expectation of a random variable $X$ on $(\Omega,P)$. 

\begin{lemma}[Clique stability]
For all $x \in V$ and all $k \geq 0$,
$$  \E[V_{k-1}^-(x)] = \frac{V_{k-1}(x)}{k+1}  \; . $$
\end{lemma}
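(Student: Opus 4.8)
The plan is to compute the expectation by linearity, reducing the whole statement to a single-simplex symmetry computation. First I would fix the bookkeeping: $V_{k-1}(x)$ counts the $K_k$ subgraphs $\sigma$ of the sphere $S(x)$, each consisting of $k$ mutually adjacent vertices $\{v_1,\dots,v_k\}$ that are all adjacent to $x$, so that $\{x\}\cup\sigma$ is a $K_{k+1}$ simplex in $G$. The random variable $V_{k-1}^-(x)$ counts those $\sigma$ all of whose vertices lie in $S^-(x)$, i.e. satisfy $f(v_i)<f(x)$. Writing each such count as a sum of indicators $\mathbf{1}_{\{\sigma\subseteq S^-(x)\}}$, linearity gives $\E[V_{k-1}^-(x)]=\sum_\sigma \Prob[\sigma\subseteq S^-(x)]$, so the problem collapses to evaluating one of these probabilities.

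The core step I would then carry out is the symmetry computation for a single fixed simplex $\sigma$. The event $\{\sigma\subseteq S^-(x)\}$ holds exactly when $f(x)>f(v_i)$ for all $i$, that is, when $x$ attains the strict maximum $f$-value among the $k+1$ vertices of $\{x\}\cup\sigma$. Because the measure on $\Omega$ is a product measure in which each $f(y)$ has the same continuous distribution, the values $f(x),f(v_1),\dots,f(v_k)$ are i.i.d., and continuity makes ties negligible. By exchangeability each of the $k+1$ vertices is equally likely to carry the maximum, so $\Prob[\sigma\subseteq S^-(x)]=1/(k+1)$.

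Summing this over all $V_{k-1}(x)$ simplices then yields $\E[V_{k-1}^-(x)]=V_{k-1}(x)/(k+1)$. I would finish by checking the base case $k=0$ separately against the convention $V_{-1}(x)=1$: the unique $K_0$ subgraph is the empty simplex, vacuously contained in $S^-(x)$ with probability $1$, which matches $V_{-1}(x)/1=1$.

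I do not expect a serious obstacle; the one place demanding care is the exchangeability step, which is precisely where the two standing hypotheses on $\Omega$ are consumed — independence of the coordinates $f(y)$, so that the joint law is invariant under permuting vertices, and continuity of their common distribution, so that the maximum is almost surely unique. A secondary point to watch is the index bookkeeping, since $V_{k-1}$ refers to simplices on $k$ vertices and the denominator must come out to $k+1$ rather than $k$.
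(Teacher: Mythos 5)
Your proof is correct. It differs from the paper's in how the key probability $1/(k+1)$ is produced. You reduce, via linearity over the $K_k$ subgraphs $\sigma$ of $S(x)$, to the event that $f(x)$ is the strict maximum among the $k+1$ i.i.d.\ continuous values on $\{x\}\cup\sigma$, and conclude by exchangeability. The paper instead conditions on the quantile $p$ of $f(x)$ under the common distribution: given $f(x)$, each vertex of $S(x)$ lies in $S^-(x)$ independently with probability $p$, so $\E[V_{k-1}^-(x)\mid p]=p^k V_{k-1}(x)$, and since $p$ is uniform on $[0,1]$ the result follows from $\int_0^1 p^k\,dp = 1/(k+1)$. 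The two computations consume exactly the same hypotheses (independence plus a common continuous distribution), and of course $\int_0^1 p^k\,dp$ is just the integrated form of your symmetry count. Your version is slightly more elementary (no conditioning or integration) and is explicit about the almost-sure uniqueness of the maximum, which the paper leaves tacit; the paper's ``knock off each node with probability $p$'' formulation has the advantage of making visible the Bernoulli mechanism that underlies the integral-geometric interpretation of curvature as an index expectation, and it is the form that would adapt if one wanted to weight the quantile $p$ non-uniformly. Your closing check of the $k=0$ case against the convention $V_{-1}(x)=1$ is a detail the paper does not address but which is handled consistently by your reading of the empty simplex.
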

\begin{proof}
Since $V_{k-1}(x)$ is the number of $K_{k}$ subgraphs of $S(x)$ and every node is knocked off
with probability $p$, we have $V_{k-1}^-(x) = p^{k} V_{k-1}(x)$. 
The statement in the lemma is proven if we integrate this from $0$ to $1$ 
with respect to $p$. 
\end{proof}

\begin{thm}[Index expectation = curvature]
\label{mainresult}
For every vertex $x$, the expectation of $i_f(x)$ is $K(x)$:
$$  \E[i_f(x)] = K(x) \; . $$
\end{thm}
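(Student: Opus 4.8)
The plan is to take expectations directly in the definition $i_f(x) = 1 - \chi(S^-_f(x))$ and reduce everything to the Clique stability lemma, which already carries all of the probabilistic content. The curvature weights $1/(k+1)$ appearing in $K(x)$ arise precisely from the averaging performed in that lemma, so once it is invoked the only remaining work is combinatorial bookkeeping.

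First I would expand the Euler characteristic of the subgraph as a finite alternating sum
$$ \chi(S^-_f(x)) = \sum_{k=0}^{\infty} (-1)^k V_k^-(x) \; , $$
where $V_k^-(x)$ counts the $K_{k+1}$ subgraphs of $S^-_f(x)$. Since $S(x)$ is a finite graph, only finitely many terms are nonzero, so there is no convergence issue and linearity of expectation applies termwise, giving
$$ \E[i_f(x)] = 1 - \sum_{k=0}^{\infty} (-1)^k \E[V_k^-(x)] \; . $$
Next I would apply Clique stability in the shifted form $\E[V_k^-(x)] = V_k(x)/(k+2)$, which follows from $\E[V_{k-1}^-(x)] = V_{k-1}(x)/(k+1)$ by replacing $k$ with $k+1$. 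Substituting yields
$$ \E[i_f(x)] = 1 - \sum_{k=0}^{\infty} (-1)^k \frac{V_k(x)}{k+2} \; . $$

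The concluding step is a reindexing. Writing the leading $1$ as $V_{-1}(x)/1$ with the convention $V_{-1}(x)=1$, and shifting the summation index by one, I would recognize the right-hand side as
$$ \sum_{k=0}^{\infty} (-1)^k \frac{V_{k-1}(x)}{k+1} = K(x) \; , $$
which is exactly the definition of curvature. The only point requiring care is this index alignment, matching each $V_k^-(x)$ with the curvature weight $1/(k+1)$ and absorbing the constant term via $V_{-1}(x)=1$; there is no genuine analytic or probabilistic obstacle, since the essential averaging over the knock-off probability was already carried out inside Clique stability.
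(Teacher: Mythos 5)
Your proposal is correct and follows essentially the same route as the paper: expand $\chi(S^-_f(x))$ as an alternating sum of clique counts, apply linearity of expectation, invoke the Clique stability lemma, and reindex so the leading $1$ is absorbed as $V_{-1}(x)/1$. The only cosmetic difference is that you shift the index in the lemma before reindexing the sum, whereas the paper reindexes the sum first; the computation is identical.
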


\begin{proof}
\begin{eqnarray*}
   \E[ 1-\chi(S^-(x)) ] &=& 1-\sum_{k=0}^{\infty}  (-1)^k \E[V_k^-(x)] 
                         = 1+\sum_{k=1}^{\infty} (-1)^k \E[V_{k-1}^-(x)]  \\
                        &=& 1+\sum_{k=1}^{\infty} (-1)^k \frac{V_{k-1}(x)}{(k+1)} 
                         =  \sum_{k=0}^{\infty} (-1)^k \frac{V_{k-1}(x)}{(k+1)} = K(x)   \; .
\end{eqnarray*}
\end{proof}

The curvature function $K(x)$ can depend on the probability measure $P$ of functions but
does not if the function values at different vertices are independent. 

\section*{Appendix B}

Here are some references. \\

Poincar\'e proved the index theorem in the eighth chapter of \cite{poincare85}. 
Hopf extended it to arbitrary dimensions in \cite{hopf26} and mentions that Hadamard had
stated the result without proof in 1910. \cite{Mil65} mentions some contribution of Brouwer.
The result has been generalized by Morse to manifolds with boundary, where the vector field is directed inwards
at the boundary \cite{Morse29} and generalized further
in \cite{Pugh1968,Jubin}. More about the history of Poincar\'e-Hopf is contained in 
\cite{Spivak1999,Mil65,Neill,Gottlieb,Hirsch}.  \\
 
The index of a vector field traces back to a winding number defined by Cauchy in 1831 for 
holomorphic functions. Cauchy in 1937 generalized the notion to differentiable self maps of the plane. 
Kronecker generalized the index to $C^1$ maps of $R^n$ in 1869 cite{DincaMawhin}.
The index of a vector field at an isolated singularity $p$ is traditionally defined as the degree
of the self map $x \to F(p+\epsilon x)/|F(p+\epsilon x)|$ on the sphere.
The degree of a continuous map $f:N \to N$ was first defined topologically in \cite{brouwer11}
and can be traced back to Kronecker \cite{Gottlieb}.
It  can for smooth maps be computed as $\sum_{y \in f^{-1}(x)} {\rm sign}({\rm det} (df(y)))$ at a regular value $x$. \\

The Euler curvature for Riemannian manifolds 
was first defined by Hopf in his 1925 thesis for hypersurfaces \cite{hopf26,Chern1990}
where it is the product of the product n principal curvatures. The general definition is due to 
Allendoerfer \cite{Allendoerfer} and independently by \cite{Fenchel} for Riemannian manifolds embedded in an Euclidean space.
Since we know today that this includes all Riemannian manifolds, the Allendoerfer-Fenchel is the general definition. 
It is also called Lipschitz-Killing curvature \cite{Trofimov} or Euler curvature. It is now part of a larger
frame work, the Euler class being one of the characteristic classes.  \\

The first higher dimensional generalization of Gauss-Bonnet was obtained by Hopf 
(and earlier by Dyck in a special case in 1888 \cite{Petersen,Gottlieb}) for compact hypersurfaces in $R^{2n+1}$
who showed that the degree of the normal map of a hypersurface 
is $\chi(M)/2$ \cite{Spivak1999}, a formula which can be seen topological since it does not involve curvature. 
Gauss-Bonnet in higher dimensions was obtained first independently by Allendoerfer \cite{Allendoerfer}
and Fenchel \cite{Fenchel} for surfaces in Euclidean space using tube methods and extended jointly by
Allendoerfer and Weil \cite{AllendoerferWeil} to closed Riemannian manifolds which from modern perspective
is the full theorem already thanks to the Nash's embedding theorem (i.e. \cite{Gunther}). 
Chern gave the first intrinsic proof in \cite{Chern44}. The proof is given in
\cite{Gray,Spivak1999,Petersen}. See \cite{Cycon,Rosenberg} for a textbook versions of Patodi's proof
using Witten's deformed Laplacian. Gauss-Bonnet has a long history. 
Special cases (see \cite{FuchsTabachnikov}) are the fact known by ancient Greeks that the sum 
of the angles in a triangle are $\pi$, Descartes lost theorem about the sum of the polyhedral 
curvatures in a convex polyhedron or Legendre's theorem stating that the sum of the angles in a spherical triangle in a 
unit sphere is $\pi$ plus the area of the triangle. 
Gauss generalized this Harriot-Girard theorem \cite{Morgan} to geodesic triangles on a surface proving that
the integral of Gauss curvature over the interior is the angle defect.
For expositions of polyhedral Gauss-Bonnet results see \cite{Polya54,Banchoff70}).
By triangularization, this Gauss-Formula leads to Gauss-Bonnet too \cite{Spivak1999}. \\

Discrete differential geometry is useful in many parts of mathematics like
computational geometry \cite{Devadoss,ComputationalGeometry}, the analysis of
polytopes \cite{Schlafli,coxeter,gruenbaum,symmetries},
integrable systems \cite{BobenkoSuris}, networks \cite{CohenHavlin,newman2010,WeinanLuYao}.
computer graphics or computational-numerical methods \cite{Bobenko,CompElectro2002,YJLG}. \\

For Morse theory see \cite{Mil63,Mil65,morse31} or any differential geometry textbook like
\cite{Guillemin,Spivak1999,Hirsch,Henle1994,docarmo94,Warner,Jost,BergerGostiaux,leeriemannian}.
A discrete Morse theory for cell complexes which is closer to classical differential
geometry is developed in \cite{forman95,forman98}. There, Morse theory is built for 
simplicial complexes. It is close to classical Morse theory and many classical results like
the Reeb's sphere theorem is proven in this framework. 
The theory has concrete applications in graph theory. \\

Discrete differential geometry is used also in 
gravitational physics \cite{GSD,Regge,Misner,Fro81,MarsdenDesbrun} and 
statistical mechanics \cite{Itzykson} and other parts \cite{GSD}. 
The elementary approach to discrete differential geometry followed here is graph 
theoretical \cite{BM,bollobas1,Chung97} and therefore different from the
just mentioned approaches which include more structure, usually structure from Euclidean
embeddings like angles or lengths. These concepts are always related because 
any simplicial complex defines a graph and conversely, graph are embedded 
in Euclidean space to model geometric objects there. Setting things up purely graph theoretically
can reveal how much is combinatorial and independent of Euclidean embeddings.  \\

Discrete curvature is by Higushi traced back to a combinatorial curvature defined in \cite{Gromov87}.
It is there up to normalization defined by $K(p) = 1-\sum_{j \in S(p)} (1/2-1/d_j)$, where 
$d_j$ are the cardinalities of the neighboring face degrees
for vertices $j$ in the sphere $S(p)$. 
For two dimensional graphs, where all faces are triangles,
this simplifies to $d_j=3$ so that $K=1-|S|/6$, where $|S|$ is the cardinality of the 
sphere $S(p)$ of radius 1 appears in \cite{elemente11}. Discrete curvature was used
in \cite{Higuchi} who uses a definition of an unpublished talk by Ishida from 1990. 
Higushi's curvature is used in large-scale data networks \cite{NarayanSaniee}
Gauss-Bonnet for planar graphs appears in the form $\sum_n (6-n) f_n = 12$
in \cite{princetonguide} and in \cite{BM} in the form $\sum_g C(g)=-12$ 
where $C(g) = d(g)-6$ is called the {\bf charge} at $x$, where $d(g)$ is the degree.
Estimates for the size of graphs with positive combinatorial curvature \cite{RetiBitayKosztolanyi}.
The general Euler curvature was first defined in \cite{cherngaussbonnet}. It appears to be the first
curvature defined for general simple graphs which satisfies Gauss-Bonnet.
Other curvatures which refer to Euclidean embeddings are sometimes called discrete curvature 
and used in computer graphics both for surfaces as well as in image analysis 
for numerical general relativity like Regge calculus. \\

Other unrelated curvatures for graphs have been defined: in \cite{beresina}, 
the curvatures of the surface $2z=\langle x,Ax \rangle$ in $R^{{\rm ord}(G)+1}$ 
are considered, where $A$ is the adjacency matrix of $G$. 
The local clustering coefficient defined by Watts and Strogatz \cite{WattsStrogatz} 
is $c(x) = V_1(x)2/(V_0(x)(V_0(x)-1))$. 
Ricci curvatures for Markov chains on metric spaces has been defined by 
\cite{Ollivier} and studied for graphs in \cite{LinLuYau,JostLiu} for pairs of vertices $x,y$.
A distance between probability distributions $m_1,m_2$ on the vertex set $V$ can be defined as
$W(m_1,m_2) = \sup_f \sum_x f(x)[m_1(x)-m_2(x)]$, where the supremum is taken over all $1$-Lipshitz functions $f$.
Now define $k_{\alpha}(x,y)=1-W(m_x,m_y)/d(x,y)$, where $m_x(v)=\alpha$ for $v=x$ and 
$m_x(v)=(1-\alpha)/V_0(x)$ for $v \in S(x)$. The Ollivier Ricci curvature is the limit 
$k(x,y) = \lim_{\alpha \to 1} k_{\alpha}(x,y)$.  \\

The inductive dimension for graphs \cite{elemente11}
is formally related to the inductive Brouwer-Menger-Urysohn dimension for topological spaces
which goes back to the later work of Poincar\'e \cite{HurewiczWallman,Engelking,Pears,munkres} and was given by 
Menger in 1923 and Urysohn in 1925. It also is related to Brouwer's Dimensionsgrad \cite{brouwer1913} given in 1913.
Any natural topology on a graph renders a graph zero-dimensional with the Menger-Urysohn as well as the Brouwer definition.
The Menger-Uryson dimension for example is zero if there is a basis for open sets for which every basis element is 
both closed and open. 
The inductive dimension for graphs was already defined in \cite{elemente11} is a rational number. It is natural also in 
probabilistic contexts as its expectation can be computed easily in random graphs \cite{randomgraph}.
There are other unrelated notions of dimension in graph theory like the EHT embedding dimension
\cite{EHT} which is based on the smallest dimensional Euclidean space in which the graph can be embedded, with variants
like Euclidean dimension \cite{Soifer} or faithful dimension \cite{ErdoesSimonovits}.
The metric dimension of a graph \cite{HararyMelter} 
is the minimal number of points $x_i \in V$ allowing to characterize
every vertex by its distance coordinates $a_i = d(x,x_i)$. The metric dimension of $K_n$ is $n-1$ of a path it is $1$,
for a cyclic graph or tree which is not a path it is larger than $1$. \\

The history of the notion of polyhedron and polytop is complicated and "a surprisingly delicate task"
\cite{Devadoss}. "Agreeing on a suitable definition is surprisingly difficult" \cite{Richeson}.
Coxeter \cite{coxeter} defines it as a convex
body with polygonal faces. Gruenbaum \cite{gruenbaum} also works with convex polytopes, the
convex hull of finitely many points in $R^n$. The dimension is the dimension of its affine span.
The perils of a general definition have been pointed out since Poincar\'e (see \cite{Richeson,cromwell,lakatos}).
Topologists started with new definitions \cite{Alexandroff,Fomenko,ConwayCapstone,Spanier}, 
and define first a simplicial complex and then polyhedra as topological spaces which admit a triangularization by 
a simplicial complex. In this paper, we use a graph theoretical definition slightly modified from 
\cite{elemente11}: a polytop is a finite simple graph which can be completed to become a $d$-dimensional geometric graph. 
Since we would like to include $K_n$ as polytopes, one can add a voluntary truncation process
of vertices first. The graph theoretical definition it reaches all 
uniform polytopes or duals as well as graphs defined by convex hulls of finitely many points in $R^n$
or graphs defined from simplicial complexes. The assumption on the unit sphere of the geometric graph will decide
what polyhedra are included. If we insist for a two dimensional polyedron every unit sphere to be connected
then the Skilling's figure $G$ is not included. If we insist the unit spheres just to be one dimensional graphs with 
zero Euler characteristic, then $G$ is a polytop. \\

Euler characteristic for polytopes was first used by Descartes in a letter of 1630 to Leibnitz \cite{Stillwell2010},
but he did not take the extra step to prove the polyheon formula $v-e+f=2$. 
Euler sketched the first proof of the polyhedra formula in a November 1750 letter to Goldbach \cite{EulerGoldbach} where
he gives a triangularization argument. The limitations of Euler's proof are now well understood 
\cite{lakatos,FranceseRicheson}. In modern language, Euler's formula tells that the combinatorial Euler characteristic $v-e+f$ 
coincides with the cohomological Euler characteristic $b_0-b_1+b_2=2$ for connected, simply connected geometric graphs, where
"geometric" means that  $S(x)$ is a connected cyclic graph at every point and that $G$ can be oriented. 
The connectedness assures $b_0=1$, the orientability implies $b_0=b_2$ and the simply connectedness forces $b_1=0$. 
While we now know that all this can be defined purely graph theoretically without assuming any Euclidean embeddings,
Euler - evenso himself a pioneer in graph theory - did not consider polyhedra as graphs. This was a step only taken
by Cauchy \cite{Malkevich}. The story about the Euler characteristic are told in \cite{Aczel,Richeson,Eckmann1,lakatos}. 
The average Euler characteristic in random graphs was computed in \cite{randomgraph}.  \\

Random methods in geometry is part of integral geometry as pioneered by Crofton and Blaschke 
\cite{Blaschke,Nicoalescu}. 
Integral geometry  has been used in differential geometry extensively by Chern in the form of kinematic 
formulae \cite{Nicoalescu}. This is not surprising since Chern is a student of Blaschke.
It was also used by Milnor in proving total curvature estimates for knots made widely known by 
Spivak's textbook \cite{Spivak1999}. Banchoff used integral geometric methods in \cite{Banchoff67} 
and got analogue results for Polyhedra and surfaces similar to what was obtained in \cite{indexexpectation}
for graphs. Whether integral geometric methods have been used in graph theory before \cite{indexexpectation} 
is unknown to this author. \\

The history of topology \cite{Dieudonne1989} and graph theory \cite{BiggsLloydWilson,HistoryTopology}.
For an introduction to Gauss-Bonnet with historical pointers to early discrete approaches, 
see \cite{Wilson}. More historical remarks about Gauss-Bonnet are in \cite{Chern1990}. 


\vspace{12pt}
\bibliographystyle{plain}

\end{document}